\tiny\color{gray},
\theoremstyle{definition} 
\newtheorem{theorem}{Theorem}[section]
\newtheorem{proposition}[theorem]{Proposition}
\newtheorem{lemma}[theorem]{Lemma}
\newtheorem{conjecture}[theorem]{Conjecture}
\theoremstyle{definition}
\theoremstyle{remark}
\newcommand{\oeis}[1]{\href{https://oeis.org/#1}{\rm  \underline{#1}}}
\title{Experimenting with Permutation Wordle}
\author[Hiveley]{Aurora Hiveley}
\address[A.~Hiveley]{Department of Mathematics, Rutgers University, Piscataway, NJ 08854}
\email{\textcolor{blue}{\href{mailto:aurora.hiveley@rutgers.edu}{aurora.hiveley@rutgers.edu}}}
\begin{document}
\begin{abstract}
Consider a game of permutation wordle in which a player attempts to guess a secret permutation in $S_n$ in as few guesses as possible. In each round, the guessing player is told which indices of their guessed permutation are correct. How can we optimize the player's strategy? Samuel Kutin and Lawren Smithline propose a strategy called \textit{cyclic shift} in which all incorrect entries are shifted one index to the right in successive guesses, and they conjecture its optimality. We investigate this conjecture by formalizing what a ``strategy" looks like, performing experimental analysis on inductively constructed strategies, and examining the coefficients of an inductive strategy's generating function. 
\end{abstract}

\maketitle

\section{Introduction} 
Consider a game in which a game master orders $n$ objects labelled $1$ through $n$, but keeps this ordering a secret. On your turn, you will attempt to guess the secret ordering of these objects by guessing some permutation of $1, \dots, n$. The game master will then indicate the subset of these $n$ objects which are in the correct positions. The game ends when all objects are in the correct positions, meaning that you have guessed the secret ordering (referred to as the secret permutation.) This game, called ``permutation wordle," was developed by Samuel Kutin and Lawren Smithline \cite{kutin2024permutationwordle} as a modification of Josh Wardle's popular New York Times game, Wordle. It is so called due to the similarities between the feedback loop in our game and that of Wordle. 

Consider the following example game. The game master thinks of a permutation on $[5]$. Say that the guesser's first guess is 12345. The game master informs the guesser that the number 2 is in the correct position, so the latter's next guess is 52134. Now, the game master says that the number 3 is also in the correct position. The guesser's next guess is 42135, and the game master tells them that this guess is fully correct. This game is completed in three guesses. 

The guessing player in this game will be interested in an optimal strategy. Kutin and Smithline propose a strategy in which all incorrect objects from a guess are shifted one position to the right in a successive guess, skipping over (and leaving in place) each correct object. They dub this strategy \textit{circular shift}, and they conjecture that this strategy is optimal among all strategies in such a game. Kutin and Smithline say that a strategy is optimal if the probability of success in $r$ guesses, denoted by $P_r$, is maximal for any $r$. We say that a strategy is optimal if the average number of guesses needed to complete the game is minimized.

In this paper, we investigate the Kutin-Smithline's conjecture formalizing what a strategy looks like for a game of permutation wordle. We pursue an inductive approach, building strategies iteratively and examining how performance changes across iterations. The central result of this paper uses generating functions to study the optimality of cyclic shift for a game terminating in one, two, or three guesses.

\section{Notation} \label{sec:not}
Following Kutin-Smithline's notation, we will use $\gamma_1, \gamma_2, \dots $ to refer to the guessed permutations in each round. Without loss of generality and for ease of analysis, in this paper we will always assume that $\gamma_1 = [1,2,\dots,n]$. For a guess $\gamma_r$, let the set of correct positions (the ones which agree with the secret permutation) be denoted by $\mathcal{J}_r$, and let the set of incorrect positions be denoted by $\mathcal{I}_r$. In this sense, $\mathcal{J}_r \sqcup \mathcal{I}_r = [n]$ for each $r$.

We formalize a strategy $S$ by defining $S := [s_1, s_2, \dots, s_n]$ where $s_i$ is a permutation of length $i$. Throughout this paper, we refer to $s_i$ with indexing notation, i.e. $S[i]$. To form the next guess $\gamma_{r+1}$ from $\gamma_r$ using $S$, we will lock in all correct positions from $\mathcal{J}_r$ and permute the incorrect positions in $\mathcal{I}_r$. We permute the elements of $\mathcal{I}_r$ according to the strategy component $S[|\mathcal{I}_r|]$. 

In this way, we formalize Kutin-Smithline's proposed ``cyclic shift" strategy (abbreviated $CS$ in this paper) by writing the following:

\[ CS := \left[ [1], [2,1], [2,3,1], \dots, [2,3,\dots, n, 1] \right] \]

\vspace{0.5cm}

Then a general component of cyclic shift takes the form $CS[n] = [2,3,\dots, n,1]$. We will default to a cyclic shift to the \textit{right}, meaning that each incorrect index is shifted to the first available index to the right. But note that the leftward cyclic shifting strategy has an analogous interpretation in which a general component takes the form $[n,1,2,\dots,n-1]$. 

We preliminarily observe that a strategy component $S[n]$ ought to be a derangement of $[n]$, otherwise at least one incorrect entry will be guessed in the same incorrect position for $\gamma_{r+1}$ as it was in $\gamma_r$. Such a guess could never end the game since we know that at least one position is incorrect (in actuality, at least \textit{two} positions are incorrect) and we waste an opportunity to learn more information by swapping a different element into that same incorrect position. Thus we assume that such a strategy \textit{must} be inefficient, and so we use only derangements for strategy components. 

In the next section, we will investigate an inductive approach to strategy construction using derangements as strategy components. However, derangements as a whole may produce challenges for us later on. Take, for example, the derangement $[2,1,4,3]$. Say that $\gamma_r$ is generated using $S[4] = [2,1,4,3]$, then $\gamma_r$ swaps the first two entries in $\gamma_{r-1}$ as well as the last two entries. But if $\mathcal{J}_r = \emptyset$, then $S[4]$ generates $\gamma_{r+1}$ as well by performing the same pairwise swaps once more. But this means that $\mathcal{I}_{r+1} = \mathcal{I}_{r-1}$, and the game enters an infinite loop where for any $s \geq r$, the set $\mathcal{I}_s$ can never be nonempty. Hence, for simplicity of computation, later on in Section \ref{sec:gf} we will narrow our scope to include only \textit{cyclic permutations} as strategy components, thereby ensuring that no such infinite loop is possible.

\section{Strategy Analysis} \label{sec:stratanal}

Our analysis begins by compiling experimental data using the Maple package linked in the Appendix. We first construct the set of all strategies of length $n$ whose components are derangements (called ``deranged strategies") and separately construct those whose components are cyclic permutations (called ``cyclic strategies".) For each such strategy, we compute the average number of guesses needed to win a game of permutation wordle if the guesser uses that strategy. To do this, for each permutation of length $n$, we execute a game of permutation wordle, count how many guesses are needed for the game to terminate, and average this count over all $n!$ secret permutations of length $n$. 

Using Maple, we were able to verify Kutin-Smithline's conjecture for cyclic strategies up to length $n=7$ and for deranged strategies up to length $n=5$. For larger $n$, we encountered limitations due to run time and computational memory. We also tested cyclic shift's performance by comparing it to other strategies of the same length constructed by induction (see the next section for details.) For inductively constructed strategies, we were able to verify Kutin-Smithline's conjecture up to $n=8$. 

With supporting experimental findings in hand, we turn our attention to a mathematical study of cyclic shift compared to other strategies. In the following section, we will define an inductively constructed strategy and examine whether induction can help us verify cyclic shift's optimality.

\subsection{Inductive Construction} \label{sub: inductive}

We pursue an inductive approach. Consider a strategy $S$ of length $n$ such that $S[i] = CS[i]$ for $1 \leq i \leq n-1$, but $S[n]$ is an arbitrary cyclic permutation of length $n$. Then to study Kutin-Smithline's conjecture, we must focus on how $S[n]$ performs before induction takes over. In other words, we are most interested in how a game plays out up until $\mathcal{J}_r \neq \emptyset$. From Kutin-Smithline's conjecture, we would expect that $[2,3,\dots,n,1]$ (or $[n,1,2,\dots,n-1]$, for a left-shifting strategy) perform the ``best" in this context. But how can we test this theory?

We quantify the performance of a strategy component $S[n]$ on permutations of length $n$ as follows. For any derangement $d \in D_n$ acting as the secret permutation, we execute one iteration of our guessing procedure. As usual, our first guess will be the trivial permutation $[1,2,\dots,n]$. From there, generate another guess using $S[n]$ and count the number of correct positions, i.e. $|\mathcal{J}_2|$. Averaging this quantity across all derangements $d$, we produce a measurement of the performance of a strategy component $S[n]$. Note that we use derangements for this measurement so that we guarantee that $\mathcal{J}_1 = \emptyset$ after guessing the trivial permutation, thus assuring that $S[n]$ is triggered to produce a new permutation on $[n]$. If $\mathcal{J}_1 \neq \emptyset$, then induction kicks in immediately for a permutation of length less than $n$. 

Surprisingly, all possible (deranged) choices for $S[n]$ have the same average performance across secret permutations in $D_n$ under this framework. Even more interestingly, the average number of correct indices after one guess for a strategy component of length $n$ is $\frac{n-1}{n}$. We present the following proposition:

\begin{proposition} \label{prop:derange}
    For any deranged strategy component $S[n]$, the average size of $\mathcal{J}_2$ across all games of permutation wordle whose secret permutation is a derangement of length $n$ is $\frac{n}{n-1}$.
\end{proposition}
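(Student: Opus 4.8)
The plan is to translate the averaging problem into a counting problem about agreements between two permutations, and then exploit the symmetry of the set of derangements under relabeling of positions and values.

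First I would unwind the definitions of Section~\ref{sec:not}. Since $\gamma_1 = [1,2,\dots,n]$ and the secret permutation $d$ is a derangement, we have $\mathcal{J}_1 = \emptyset$ and $\mathcal{I}_1 = [n]$, so the second guess is obtained by permuting \emph{all} of the entries of $\gamma_1$ according to $S[n]$; the result is a fixed-point-free permutation of $[n]$, which I will call $\sigma$ (a relabeling of $S[n]$ depending only on the indexing convention, and in particular still a derangement). Then $|\mathcal{J}_2|$ is exactly the number of positions where $\sigma$ and $d$ agree, so the claim is equivalent to
\[
\frac{1}{D_n}\sum_{d\in D_n}\#\{\,i\in[n] : \sigma(i)=d(i)\,\}=\frac{n}{n-1}
\qquad\text{for every derangement }\sigma.
\]

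Next I would apply linearity of expectation, i.e. interchange the two sums, to get $\sum_{d\in D_n}\#\{i:\sigma(i)=d(i)\}=\sum_{i=1}^n \#\{d\in D_n : d(i)=\sigma(i)\}$. Because $\sigma$ is a derangement, $\sigma(i)\neq i$ for each $i$, so every inner term has the form $N_{i,v}:=\#\{d\in D_n : d(i)=v\}$ with $v\neq i$. The key point is that, by relabeling positions and values, $N_{i,v}$ depends only on whether $i=v$; denote its common value on off-diagonal pairs by $c$. Double-counting the set of triples $(d,i,v)$ with $d\in D_n$, $i\neq v$, and $d(i)=v$: grouping by $(i,v)$ gives $n(n-1)c$, while grouping by $d$ gives $\sum_{d\in D_n} n = nD_n$, since each derangement contributes precisely the $n$ pairs $(i,d(i))$, all of which are off-diagonal. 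Hence $c=D_n/(n-1)$, so $\#\{d\in D_n:d(i)=\sigma(i)\}=D_n/(n-1)$ for each of the $n$ positions, and summing and dividing by $D_n$ yields $\frac{n}{n-1}$.

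I do not anticipate a serious obstacle; the only steps requiring care are confirming that $|\mathcal{J}_2|$ really equals the number of agreements between $\sigma$ and $d$ within the strategy formalism (in particular, that the convention used to turn $S[n]$ into $\gamma_2$ does not affect fixed-point-freeness), and justifying that a uniformly random derangement has a distribution invariant under simultaneous relabeling so that $N_{i,v}$ is constant on off-diagonal pairs. An alternative to the double-counting step is to evaluate $N_{i,v}=D_{n-1}+D_{n-2}$ directly and invoke the recurrence $D_n=(n-1)(D_{n-1}+D_{n-2})$; either way one obtains $N_{i,v}/D_n=1/(n-1)$. Finally it is worth remarking that the argument uses nothing about $S[n]$ beyond its being a derangement, which is exactly why every deranged choice of $S[n]$ exhibits the same one-step performance.
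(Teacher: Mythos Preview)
Your proof is correct and follows essentially the same route as the paper: identify $\gamma_2$ with a fixed derangement (the paper takes $\delta=(S[n])^{-1}$), interchange the two sums, and show that each inner count $\#\{d\in D_n:d(i)=\delta(i)\}$ equals $|D_n|/(n-1)$. The only difference is in that last computation: the paper evaluates it directly as $|D_{n-1}|+|D_{n-2}|$ via the standard derangement recurrence---exactly the alternative you mention at the end---whereas your primary symmetry-plus-double-counting argument is a clean variant that sidesteps the recurrence entirely.
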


\begin{proof}
To study a game using $S[n]$, first note that if $\gamma_1 = [1,2,\dots,n]$, then $\gamma_2 = (S[n])^{-1}$, assuming that the secret permutation is a derangement. Then $\mathcal{J}_2 = \{i \in [n] \mid d[i] = (S[n])^{-1}[i] \} $, so the set of correct positions is precisely the set of positions where the derangement and the inverse of our strategy component agree. Since the inverse of a derangement is also a derangement, and inverses are in one-to-one correspondence, we proceed using a derangement $\delta = (S[n])^{-1}$. Note then that $|\mathcal{J}_2| = \left| \{ i \in [n] \mid \delta(i) = d(i) \} \right|$ for a secret permutation $d$, so the average size of $|\mathcal{J}_2|$ across all $d \in D_n$ is $\frac{1}{|D_n|} \sum_{d \in D_n} \left| \{ i \in [n] \mid \delta(i) = d(i) \} \right|$. To prove the claim that this average is equal to $\frac{n}{n-1}$, we will show the following equivalent statement:

\begin{equation} \label{eq:derange}
    \sum_{d \in D_n} \left| \{ i \mid d(i) = \delta (i) \} \right| = \frac{n}{n-1} \cdot | D_n|
\end{equation}

\vspace{0.25cm}

On the left hand side of Eq. \ref{eq:derange}, we may equivalently fix a position in our fixed derangement $\delta$ and count the number of other derangements which share that position, then sum across all possible positions. Then we have that 
$ \sum_{d \in D_n} \left| \{ i \mid d(i) = \delta(i) \} \right| 
= \sum_{i=1}^n \left| \{ d \in D_n \mid d(i) = \delta(i) \} \right| $. From there, we may calculate $| \{ d \in D_n \mid d(i) = \delta(i) \} |$ by utilizing the recurrence for $|D_n|$. Recall that $|D_n| = (n-1)(|D_{n-1}| + |D_{n-2}|)$, wherein we construct a derangement of length $n$ by choosing a position for the element $n$ and then deranging the remaining elements. There are $n-1$ possible locations for $n$. Say that $d[n] = j$. Then if $d[j] = n$ as well, the number of ways to derange the remaining elements is $|D_{n-2}|$. If $d[j] \neq n$, then the number of ways to derange the elements of $[n] \setminus \{n\}$ is $|D_{n-1}|$ since while $j$'s fixed position is occupied, we prohibit $d[j] = n$.

In our case for each fixed entry $i$, we know its location in $\delta$ so we do not need to \textit{choose} one of $n-1$ possible locations for the element $i$. Then it suffices to calculate the number of ways to derange the remaining elements, which is $|D_{n-1}| + |D_{n-2}|$. Thus, we have the following:

\begin{align*}
    \sum_{d \in D_n} \left| \{ i \mid d(i) = \delta(i) \} \right|
    &= \sum_{i=1}^n \left| \{ d \in D_n \mid d(i) = \delta(i) \} \right| \\
    &= \sum_{i=1}^n (|D_{n-1}| + |D_{n-2}|) \\ 
    &= n \cdot (|D_{n-1}| + |D_{n-2}|) \\ 
    &= n \cdot \left( \frac{|D_n|}{n-1} \right) 
\end{align*}
    
\end{proof}

We also note that the left hand side sum in Eq. \ref{eq:derange} produces the sequence 0, 2, 3, 12, 55, 318, 2163, 16952 for $1 \leq n \leq 8$ (otherwise known as \oeis{A284843}.) But interestingly, the distribution of correct indices across all derangements is not equivalent for all strategy components $S[n]$. Consider the examples in Table \ref{tab:example}. For two possible strategy components $S[n]$ where $n=4$, the corresponding set $\mathcal{J}_2$ is given for each secret permutation $d \in D_4$. For convenience, $\gamma_2 = (S[n])^{-1}$ is also listed beneath each example $S[n]$. Observe that for $CS$, there is one derangement with 4 common positions, two with 2, and four with 1. Meanwhile for the cycle $[2,1,4,3]$, there is one derangement with 4 common positions and four with 2, but none with only 1. 

In this section, we proved that all $S[n] \in D_n$ have the same average performance. Then in an inductive approach, simply trading $S[n]$ for another derangement and examining its highest level performance will not do. We must instead examine how the performance later on in the game is effected by this exchange of strategy components. In the next section, we will proceed by studying the number of permutations which can be guessed in $r$ guesses for a given number $r$.

\begin{table}[h!]
\centering

\caption{$\mathcal{J}_2$ for two cyclic $S[n]$}
\begin{tabular}{c||c|c}
\text{Secret Permutation} & $S[n] := [2,3,4,1]$ & $S[n] := [2,1,4,3]$ \\
& $\gamma_2 = [4,1,2,3]$ & $\gamma_2 = [2,1,4,3]$ \\
\hline
$[2,1,4,3]$ & $\{2,4\}$ & $\{1,2,3,4\}$ \\ 
$[2,3,4,1]$ & $\emptyset$ & $\{1,3\}$ \\ 
$[2,4,1,3]$ & $\{4\}$ & $\{1,4\}$ \\ 
$[3,1,4,2]$ & $\{2\}$ & $\{2,3\}$ \\ 
$[3,4,1,2]$ & $\emptyset$ & $\emptyset$ \\ 
$[3,4,2,1]$ & $\{3\}$ & $\emptyset$ \\ 
$[4,1,2,3]$ & $\{1,2,3,4\}$ & $\{2,4\}$ \\ 
$[4,3,1,2]$ & $\{1\}$ & $\emptyset$ \\ 
$[4,3,2,1]$ & $\{1,3\}$ & $\emptyset$ \\ 
\end{tabular}
\label{tab:example}
\end{table}

\section{Generating Functions} \label{sec:gf}

Kutin-Smithline discuss generating functions in their paper in the context of multi-colored permutation wordle. We do not study this extension in our paper, however we will borrow their generating function framework to analyze strategies of standard permutation wordle. For a specific strategy $S$ of length $n$, we construct a generating function $f_S(x)$ according to its performance. A term $a_r x^r$ of $f_S(x)$ communicates that there are $a_r$ many permutations of length $n$ that the strategy $S$ guesses in exactly $r$ guesses. In the rest of this section, we are particularly concerned with $a_r = [x^r] f_S(x)$ for $r = 1,2,$ and $3$.

\subsection{Linear and Quadratic Coefficients.} \label{sub:linquad}

Importantly, in section 3 of \cite{kutin2024permutationwordle}, Kutin and Smithline prove that the coefficients of the generating function for cyclic shift are the Eulerian numbers. In other words, $[x^k]f_{CS}(x) = A(n,k-1)$ if $CS$ is the cyclic shifting strategy of length $n$. Since cyclic shift's performance is determined by the number of excedances in the secret permutation, the coefficients of $f_{CS}(x)$ are nice and have this clean recurrence. Determining the coefficients of $f_S(x)$ for a more general strategy $S$ of length $n$ is more daunting, but we do have the following:

\begin{theorem} \label{thm:linquad}
    For all strategies of length $n$, $[x^1]f_{S}(x) = 1$ and $[x^2]f_{S}(x) = A(n,1)$. 
\end{theorem}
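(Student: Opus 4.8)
The plan is to treat the two coefficients separately, since both reduce to understanding which permutations can arise as the first two guesses. For the linear coefficient there is nothing to do beyond our normalization: because $\gamma_1$ is a fixed permutation (namely $[1,2,\dots,n]$), independent of the secret, the game wins in a single guess exactly when the secret equals $\gamma_1$, so $[x^1]f_S(x)=1$ for every strategy $S$.

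For the quadratic coefficient, the key point I would establish first is that the second guess $\gamma_2$ is completely determined by the fixed-point set $\mathcal{J}_1=\mathrm{Fix}(\sigma)$ of the secret permutation $\sigma$ together with $S$, and by nothing else about $\sigma$. Write $I:=\mathcal{I}_1=[n]\setminus\mathrm{Fix}(\sigma)$ and $k:=|I|$; note that a permutation cannot have exactly one non-fixed point, so $k\geq 2$ whenever $\sigma\neq[1,2,\dots,n]$. The strategy freezes the positions of $\mathcal{J}_1$ — where $\gamma_1$ already agrees with $\sigma$ — and rearranges the positions of $I$ according to $S[k]$. Because $\gamma_1=[1,2,\dots,n]$ puts the value $i$ in position $i$, the entries sitting in the positions of $I$ are precisely the elements of $I$, so $\gamma_2$ restricted to $I$ is a permutation $\tau_I$ of the set $I$ that depends only on $I$ and $S[k]$; moreover $\tau_I$ is a derangement of $I$, since $S[k]$ is and both conjugation by the order isomorphism $[k]\to I$ and passage to the inverse preserve the property of having no fixed point. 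Now $\sigma$ is won in exactly two guesses iff $\sigma=\gamma_2$ and $\sigma\neq[1,2,\dots,n]$. Combining these observations, for each $I\subseteq[n]$ with $|I|\geq 2$ there is exactly one secret permutation won in exactly two guesses having $\mathcal{I}_1=I$, namely the permutation $\sigma_I$ that fixes $[n]\setminus I$ and equals $\tau_I$ on $I$; and because $\tau_I$ is a derangement of $I$, the non-fixed set of $\sigma_I$ is exactly $I$, so $I\mapsto\sigma_I$ is well defined and injective and always produces $\sigma_I\neq[1,2,\dots,n]$. Conversely every permutation won in exactly two guesses is some $\sigma_I$ (take $I=\mathcal{I}_1$). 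This yields a bijection between the permutations won in exactly two guesses and the subsets of $[n]$ of size at least $2$, whence
\[
[x^2]f_S(x)\;=\;\sum_{k=2}^{n}\binom{n}{k}\;=\;2^n-n-1\;=\;A(n,1),
\]
the last step being the classical closed form for the Eulerian number $A(n,1)$.

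I expect the only real friction to be in the middle of the second argument: pinning down the precise convention by which $S[k]$ acts on the positions of $I$ (the proof of Proposition~\ref{prop:derange} records that applying $S[n]$ to $[1,2,\dots,n]$ produces $(S[n])^{-1}$, and one needs the analogous bookkeeping when fixed points are present), and then checking that $\sigma_I$ has non-fixed set \emph{exactly} $I$ rather than merely agreeing with $[1,2,\dots,n]$ off $I$ — this is the one place where the standing assumption that strategy components are derangements is genuinely used. Everything else is routine: the linear coefficient is immediate, and $\sum_{k\geq 2}\binom{n}{k}=A(n,1)$ is standard. It is worth noting that the computation never uses which derangements the $S[k]$ happen to be, so $[x^2]f_S$ is the same for all strategies $S$; in particular it agrees with the cyclic-shift value, which Kutin and Smithline identify as the Eulerian number $A(n,1)$.
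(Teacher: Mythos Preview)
Your proof is correct and follows essentially the same approach as the paper: both arguments establish a bijection between permutations won in exactly two guesses and subsets $I\subseteq[n]$ with $|I|\geq 2$, then sum $\binom{n}{k}$ over $k\geq 2$ to obtain $A(n,1)$. Your version is a bit more careful than the paper's in verifying that this really is a bijection---in particular, you make explicit use of the derangement hypothesis on $S[k]$ to check that $\sigma_I$ has non-fixed set \emph{exactly} $I$ (so that distinct $I$ give distinct $\sigma_I$)---whereas the paper leaves this implicit.
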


\begin{proof}
Recall that without loss of generality, $\gamma_1$ is always the trivial permutation of length $n$. Then the only permutation which can be guessed in only one guess \textit{is} the trivial permutation, and this property is independent of the strategy used. Thus, $[x^1]f_S(x)$ will always be equal to 1.

To count the number of strategies guessable in two guesses by a set strategy $S$, we will count the number of permutations that can be generated in two guesses by the strategy $S$. We start with the trivial permutation for $\gamma_1$. Some subset of indices $\mathcal{I}_1$ are incorrect after the first guess, then the strategy component $S_{|\mathcal{I}_1|}$ is used to generate the second guess. Since $S_{|\mathcal{I}_1|}$ is deterministic, counting the number of permutations which can be generated in two guesses is equivalent to counting the number of possible subsets which may form $\mathcal{I}_1$. 

Note that if $\gamma_1$ is not the secret permutation, then $2 \leq |\mathcal{I}_1| \leq n$. We cannot have zero incorrect entries, otherwise we fall into the previous case, and it is impossible to have only one incorrect entry in a permutation. Let $|\mathcal{I}_1| = k$, then there are $\binom{n}{k}$ possible subsets which can form $\mathcal{I}_1$. Allowing $k$ to range from $2$ to $n$, we have that the number of permutations which may be generated in two guesses is $\sum_{k=2}^n \binom{n}{k} = A(n,1)$ (\oeis{A000295})
    
\end{proof}

Let's consider a special case of this argument: analysis of the inductively constructed strategies from Section \ref{sub: inductive}. For an inductive strategy, any permutation with at least one correct entry in $\gamma_1$ will use a ``lower level" strategy component to generate $\gamma_2$, meaning that $\gamma_2$ is generated by the component $S[m]$ for $m < n$. However, these components are all cyclic shifting for an inductive strategy, and hence they are all identical for any inductive strategy. Then any permutation which can be guessed in two guesses by some inductive strategy, and which is \textit{not} a derangement, can be guessed by \textit{any} inductive strategy in two guesses. For a fixed inductive strategy, there is one other permutation which can be guessed in two guesses, and this is the permutation $\gamma_2 = (S[n])^{-1}$ produced when $\mathcal{J}_1 = \emptyset$. Then for any strategy, $(S[n])^{-1}$ is guessable in two turns, but any other permutation guessable in two must've had at least one common entry with $\gamma_1$, and so it is guessable by \textit{any} inductively constructed strategy. This, however, is where the similarities between cyclic shift and an arbitrary inductive strategy stop.

\subsection{Cubic Coefficients.} \label{sub:cubic}

As we discussed before in the start of Section \ref{sec:stratanal}, experimental data does suggest that certain strategies are more efficient or optimal than others, and this difference is reflected in the later coefficients of the generating function. To analyze these coefficients, we must first do some bookkeeping. In order for a game to end in at least three guesses, we must work with permutations of length 3 or greater, so our strategy must also have length at least 3. There are only two cyclic strategies of length 3: rightward cyclic shift and leftward cyclic shift. In both cases, $[x^3]f_S(x) = A(3,2) = 1$, so this case is well understood. Henceforth, we concern ourselves only with strategies of length 4 or more. Consider the following theorem:

\begin{conjecture} \label{conj:bestcubic}
    Let $f_{CS}(x)$ be the generating function for the cyclic shift strategy of length $n$. For all strategy lengths $n$, $[x^3]f_{CS}(x) > [x^3]f_{S}(x)$ for all deranged strategies $S$.
\end{conjecture}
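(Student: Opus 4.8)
\subsection*{Proof proposal for Conjecture~\ref{conj:bestcubic}}

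The plan is to reduce $[x^3]f_S(x)$ to a count of subsets of $[n]$ and then show that cyclic shift maximizes that count. (For $n=3$ the two cyclic strategies are the only deranged ones and both give $A(3,2)=1$, so the inequality must be read non-strictly there; we assume $n\ge 4$, as this section does.) Extending the bookkeeping of Theorem~\ref{thm:linquad}: if $\pi\ne\mathrm{id}$ has fixed-point set $F$ then every guess keeps $F$ correct and evolves only inside $B:=[n]\setminus F$, so the game on $\pi$ is literally the game on $B$ with identity start and secret $\pi|_B$, a derangement of $B$; hence $\pi$ is finished in exactly three guesses iff $\pi|_B$ is finished in exactly two strategy ``moves'' (component applications) after the identity. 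Writing $g(S,k)$ for the number of derangements of $[k]$ finished in exactly two moves by $S$ (which depends only on $S[1],\dots,S[k]$), this yields
\begin{equation}\label{eq:split}
[x^3]f_S(x)=\sum_{k=2}^{n}\binom{n}{k}\,g(S,k),
\end{equation}
and likewise for $CS$. Since $CS$ finishes a derangement of $[k]$ in exactly (number of excedances) moves — a block game is a relabelled permutation-wordle game, so this is the excedance statistic behind $[x^r]f_{CS}(x)=A(n,r-1)$ — we get $g(CS,k)=d(k,2)$, the number of derangements of $[k]$ with exactly two excedances; comparing \eqref{eq:split} for $CS$ with $A(n,2)$ recovers the fixed-point identity $A(n,2)=\sum_{k\ge 2}\binom nk d(k,2)$.

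The key step is a reformulation of $g(S,k)$. Fix $\delta:=S[k]$. When the secret is a derangement, $\gamma_1=\mathrm{id}$ leads to $\gamma_2=\delta^{-1}$ by (the proof of) Proposition~\ref{prop:derange}; unwinding the guess-update rule one round further, a derangement $\sigma$ of $[k]$ is finished on the second move iff $\delta\sigma$ is supported on $B:=\operatorname{supp}(\delta\sigma)$ (automatic) and, on $B$, equals the ``canonical'' derangement $\tau_B$ obtained by transporting $(S[|B|])^{-1}$ to $B$ along the order-isomorphism $[|B|]\to B$. Thus $\sigma=\delta^{-1}\tau_B$ is determined by $B$, automatically differs from $\gamma_2$, and is a derangement exactly when $\tau_B(i)\ne\delta(i)$ for all $i\in B$, so
\begin{equation}\label{eq:dodge}
g(S,k)=\#\bigl\{\,B\subseteq[k]:\ |B|\ge 2,\ \tau_B(i)\ne\delta(i)\ \text{for all }i\in B\,\bigr\}.
\end{equation}
Call $B$ \emph{blocked} if $\tau_B(i)=\delta(i)$ for some $i\in B$. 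For $CS$, \eqref{eq:dodge} evaluates transparently: every $B$ with $|B|\ge 3$ is unblocked, while a two-element $B$ is blocked iff it is a cyclically-adjacent pair $\{i,i+1\}$ (indices mod $k$), giving $g(CS,k)=(2^k-1-k)-k=2^k-2k-1$ for $k\ge 3$ (and $0$ for $k=2$), in agreement with $d(k,2)$.

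By \eqref{eq:split} and positivity of the binomials, Conjecture~\ref{conj:bestcubic} follows once we show $g(S,k)\le d(k,2)$ for every $k$, i.e.\ (for $k\ge 3$) that at least $k$ subsets with $|B|\ge 2$ are blocked. The $k$ pairs $\{a,\delta(a)\}$, $a\in[k]$, are all blocked (the canonical $\tau$ is the transposition, sending $a\mapsto\delta(a)$); if $\delta$ has no $2$-cycle these are distinct and we are done. If $\delta$ has $c\ge 1$ transpositions these $k$ pairs collapse to $k-c$ distinct sets, but then $B=[k]$ is blocked ($\tau_{[k]}=\delta^{-1}$ agrees with $\delta$ wherever $\delta^2$ fixes a point, hence somewhere), and moreover every three-element set containing a transposition-pair of $\delta$ is blocked — forced purely by the $3$-cycle structure of its canonical $\tau_B$, independently of $S[3]$; taking $c-1$ of these restores the count to $\ge k$. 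Hence $[x^3]f_S(x)\le A(n,2)=[x^3]f_{CS}(x)$.

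For strictness I would track the equality case. If some component $S[k]$ with $k\ge 4$ has a transposition, then in fact all $k-2$ triples through that transposition-pair are blocked, so $\ge 2k-2>k$ blocked subsets occur at size $k$ and $g(S,k)<d(k,2)$ strictly; thus equality forces every $S[k]$ ($k\ge 3$) to be transposition-free. For such $\delta=S[k]$, equality at size $k$ further forces $\delta(a)\in\{a-1,a+1\}$ (mod $k$) for all $a$ — otherwise $\{a,\delta(a)\}$ is a non-cyclically-adjacent blocked pair, an extra blocked set — and a transposition-free permutation of $\mathbb Z/k$ with that property is precisely the forward or backward shift; a collision analysis among blocks of size $\ge 3$ then shows the directions of $S[k]$ and of the lower components must agree, pinning $S$ to one of the two cyclic shifts (so for $n\ge 4$ the inequality is strict whenever $S$ is neither). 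The main obstacle I anticipate is establishing \eqref{eq:dodge} cleanly — the one genuinely new idea, requiring care with the update rule and with the claim $\operatorname{supp}(\delta\sigma)=B$ — together with the somewhat fiddly direction bookkeeping in the equality case; by contrast the inequality itself is a short subset count once \eqref{eq:dodge} is in hand, and a sensible first check is $n=4$, where \eqref{eq:split} reads $[x^3]f_S(x)=4+g(S,4)$ with $g(S,4)\le 7$, verifiable directly over $D_4$.
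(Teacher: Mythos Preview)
The paper does \emph{not} prove Conjecture~\ref{conj:bestcubic}; it leaves it open and proves only the weaker Theorem~\ref{thm:indbestcubic} for \emph{inductive} strategies (where $S[i]=CS[i]$ for $i<n$ and only $S[n]$ varies, and is cyclic). The paper's method decomposes by $\rho_S(\pi)$, the first round with a correct entry, and shows that the $\rho=1$ and $\rho=3$ contributions are constant across inductive strategies, so only the $\rho=2$ term matters. That decomposition collapses entirely once the lower components $S[i]$ are allowed to vary, so it cannot reach the full conjecture.

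Your approach is genuinely different and strictly stronger. Decomposing by the fixed-point set of $\pi$ to get $[x^3]f_S(x)=\sum_k\binom{n}{k}g(S,k)$, and then rewriting $g(S,k)$ as the count of unblocked subsets in \eqref{eq:dodge}, reduces the problem to a termwise inequality $g(S,k)\le d(k,2)$ that makes sense for \emph{arbitrary} deranged $S$. Your derivation of \eqref{eq:dodge} is correct: with $\delta=S[k]$ one has $\mathcal I_2=\operatorname{supp}(\delta\sigma)$, and on that support $\delta\sigma$ must equal the transported $(S[|B|])^{-1}$, so $\sigma=\delta^{-1}\tau_B$ is determined by $B$ and is a derangement iff $\tau_B(i)\ne\delta(i)$ on $B$. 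The lower bound of $k$ blocked subsets via the pairs $\{a,\delta(a)\}$, together with the full set $[k]$ and the triples through a transposition pair when $\delta$ has $2$-cycles, is also correct and gives the non-strict inequality cleanly. This already proves more than the paper does.

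Two points to tighten. First, your strictness step ``otherwise $\{a,\delta(a)\}$ is a non-cyclically-adjacent blocked pair, an \emph{extra} blocked set'' does not stand as written: that pair is one of the $k$ you already counted, not an extra one. What you actually need is that whenever $\delta=S[k]$ is transposition-free but not a shift, some $B$ with $|B|\ge 3$ is blocked; this is true (and your $n=4$ check confirms it), but the argument must go through size-$\ge 3$ blocks and their interaction with $S[3],\dots,S[k-1]$, not through pairs. Your ``direction bookkeeping'' remark is the right instinct here---e.g.\ $S[4]=[4,1,2,3]$ with $S[3]=CS[3]$ already produces a blocked triple---but the write-up needs that mechanism made explicit. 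Second, you correctly observe that the full left-shift strategy also attains $A(n,2)$, so the conjecture's strict inequality is literally false for that one $S$; your equality characterization (both cyclic shifts and nothing else) is the right statement to aim for.
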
 

Experimental evidence suggests that this conjecture is true for all \textit{deranged} strategies of length up to 6. We will prove a slightly weaker version of the conjecture above which considers only cyclic strategies, but it is noteworthy that this result may be extended to deranged strategies, as well. We will prove the claim for cyclic strategies using induction, or by considering only the inductively constructed strategies described in Section \ref{sub: inductive}. Consider the following modification of the previous statement:

\begin{theorem} \label{thm:indbestcubic}
    Let $f_{CS}(x)$ be the generating function for the cyclic shift strategy of length $n$. For all strategy lengths $n$, $[x^3](f_{CS}(x)) > [x^3](f_{S}(x))$ for all \textbf{inductive} strategies $S$.
\end{theorem}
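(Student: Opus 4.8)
The plan is to get an exact formula for $[x^3]f_S(x)$ for an arbitrary inductive strategy $S$ of length $n$, in terms of the single free component $\sigma := S[n]$ (a cyclic permutation of $[n]$), and then show this quantity is maximized precisely when $\sigma = CS[n] = [2,3,\dots,n,1]$. By the analysis at the end of Section \ref{sub:linquad}, the permutations guessed in $\leq 2$ moves by an inductive $S$ are: the identity, the permutation $\sigma^{-1}$, and all \emph{non-derangement} permutations that cyclic shift guesses in $\leq 2$ moves — and this second collection is independent of $\sigma$. So every permutation guessed in \emph{exactly} three moves falls into one of two disjoint classes: (i) those $\pi$ with $\mathcal{J}_1 \neq \emptyset$ (at least one fixed point of the identity guess agrees with $\pi$), for which induction immediately reduces to a smaller cyclic-shift game on $|\mathcal{I}_1| < n$ elements; and (ii) derangements $\pi$ with $\mathcal{J}_1 = \emptyset$, so $\gamma_2 = \sigma^{-1}$, and then $\pi$ must be guessed in exactly two further moves starting from $\gamma_2$.

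First I would handle class (i): it contributes a quantity $C_n$ that is the \emph{same} for every inductive strategy (in particular for $CS$ itself), since all lower components are cyclic shift. Concretely, summing over the nonempty fixed-point set $\mathcal{J}_1$ of size $n-k$ with $2 \le k \le n$, the contribution is $\sum_{k=2}^{n}\binom{n}{k}\, b_{k}$, where $b_k$ counts derangements of $[k]$ guessed by rightward cyclic shift in exactly two moves — i.e. $b_k = A(k,2) - (\text{non-derangements CS guesses in }\le 2) = $ a known Eulerian-type expression; the exact value is not needed, only that it is strategy-independent. So to compare $S$ against $CS$ it suffices to compare the class-(ii) contributions.

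Next, for class (ii), I would count derangements $\pi \in D_n$ that are guessed in exactly three moves given $\gamma_2 = \sigma^{-1}$. Set $\delta = \sigma^{-1}$; write $\mathcal{J}_2(\pi) = \{i : \pi(i)=\delta(i)\}$ and $\mathcal{I}_2(\pi)$ its complement. Guessing $\pi$ in exactly three moves means $\gamma_3 = \pi$ but $\gamma_2 \ne \pi$, i.e. $\pi \neq \delta$ and applying the appropriate cyclic-shift component $CS[|\mathcal{I}_2|]$ to the $\mathcal{I}_2$-block of $\gamma_2$ lands exactly on $\pi$. This is a precise combinatorial condition on the pair $(\delta,\pi)$ that I would unpack: on the incorrect block, $\pi$ restricted to $\mathcal{I}_2$ must be the specific permutation obtained by right-cyclic-shifting the values $\gamma_2$ carries there. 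So the count equals $\sum_{T} N(\delta, T)$ where $T$ ranges over admissible "incorrect sets" ($|T| \ge 2$) and $N(\delta,T)$ is $1$ if the forced completion is a derangement distinct from $\delta$ and $0$ otherwise. The key is that this sum depends on $\delta$ — hence on $\sigma$ — through the cycle structure and the pattern of how $\sigma^{-1}$ interacts with cyclic shifting, and I would show it is strictly maximized when $\delta$ (equivalently $\sigma$) is the full $n$-cycle $[2,3,\dots,n,1]$ or its inverse.

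The main obstacle is exactly this last strict-maximality claim for class (ii): showing $\sum_T N(\delta,T)$ is largest for $\delta = CS[n]^{-1}$. I expect to prove it by relating $N(\delta, T)$ to \emph{excedance-type statistics} — since cyclic shift's whole behavior is governed by excedances (as noted in Section \ref{sub:linquad}, $[x^k]f_{CS} = A(n,k-1)$), the forced-completion condition should translate into "$\pi$ has exactly one excedance relative to a reference determined by $\delta$," and a full $n$-cycle $\delta$ maximizes the number of positions available to place that excedance while keeping the completion a genuine derangement. A clean way to organize this: for $\delta = CS[n]^{-1}$, class (i) plus class (ii) must sum to exactly $A(n,2)$ (the Eulerian number), since $f_{CS}$ has Eulerian coefficients; so I only need to show every other cyclic $\sigma$ gives a class-(ii) count \emph{strictly smaller} than cyclic shift's, while class (i) is unchanged. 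I would prove this by exhibiting, for any cyclic $\sigma \ne CS[n], CS[n]^{-1}$, at least one derangement that cyclic shift guesses in three moves but $S$ does not (a "lost" permutation where the $\sigma^{-1}$ starting point throws off the subsequent shift), and checking no compensating gains are possible because the forced-completion map is injective in the relevant sense. The base case $n=4$ can be verified directly against Table \ref{tab:example}-style enumeration, and the inductive step rides on the fact that class (i) is built entirely from lower cyclic-shift games, for which the result is already known.
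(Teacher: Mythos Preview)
Your decomposition into class (i) ($\mathcal{J}_1\neq\emptyset$) and class (ii) ($\mathcal{J}_1=\emptyset$) is essentially the paper's split by $\rho_S(\pi)\in\{1\}$ versus $\rho_S(\pi)\in\{2,3\}$, and you correctly observe that class (i) is independent of $\sigma=S[n]$. Two small slips: when $|\mathcal{I}_2|=n$ the third guess is generated by $\sigma$, not by $CS[n]$, so your description of class (ii) as ``apply $CS[|\mathcal{I}_2|]$'' is not literally correct (though the contribution from $\mathcal{J}_2=\emptyset$ is still a single permutation either way, so the count survives); and your guess that $\sigma=CS[n]^{-1}=[n,1,\dots,n-1]$ might tie for the maximum is wrong --- that choice is exactly the $CSL$ of Section~\ref{sub:worstcubic}, which the paper shows is the \emph{minimizer}.

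The real gap is the class-(ii) maximality step. Your plan --- relate $N(\delta,T)$ to excedance statistics, exhibit a ``lost'' permutation, invoke injectivity, then induct on $n$ --- never lands on a concrete inequality, and the inductive structure you gesture at does not exist (class (ii) is a question about a single length-$n$ cycle $\sigma$; nothing smaller feeds into it). The paper's argument is much more direct and is the idea you are missing: parameterize class-(ii) games by the set $\mathcal{I}_2\subseteq[n]$ with $2\le|\mathcal{I}_2|\le n-1$. Each such set determines $\gamma_3$ uniquely, so the count is just the number of \emph{legal} $\mathcal{I}_2$, i.e.\ those for which the resulting $\gamma_3$ is a derangement. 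A set is illegal precisely when some entry $j$, sitting at position $k$ in $\gamma_2=\sigma^{-1}$, gets right-shifted all the way back to position $j$; this happens exactly when $\mathcal{I}_2$ contains $\{j,k\}$ together with every index cyclically between $k$ and $j$. For $\sigma=CS[n]$ one has $k=j+1$ for every $j$, so the illegal sets of the allowed sizes are exactly the $n$ adjacent pairs $\{j,j+1\}$, giving the count $2^n-2n-2$. For any other cyclic $\sigma$ there is at least one $j$ with $k\neq j+1$, and then the nontrivial supersets of $\{j,k\}$ using only the intermediate indices are additional illegal sets, forcing a strictly smaller count. That pigeonhole on illegal $\mathcal{I}_2$-sets is the whole proof; your proposal does not yet contain it.
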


But before digging into the details of proving this theorem, we present some examples for the reader. In the table below, we compile the generating functions of several inductive strategies of lengths 4 and 5, each identified by their final component $S[n]$.

\begin{table}[h!]
    \caption{Inductive strategy generating function examples}
    \centering
    \begin{tabular}{ll|l}
         & $S[n]$ & $f_S(x)$ \\
        \hline 
        \hline 
        $n=4$ & $[2,3,4,1]$ & $x^4 + 11x^3 + 11x^2 + x$ \\
        & $[2,4,1,3]$ & $3x^4 + 9x^3 + 11x^2 + x$ \\
        & $[2,4,1,3]$ & $3x^4 + 9x^3 + 11x^2 + x$ \\
        & $[4,1,2,3]$ & $5x^4 + 7x^3 + 11x^2 + x$ \\
        \hline 
        $n=5$ & $[2,3,4,5,1]$ & $x^5 + 26x^4 + 66x^3 + 26x^2 + x$  \\
        & $[4,3,1,5,2]$ & $8x^5 + 25x^4 + 60x^3 + 26x^2 + x$ \\
        & $[3,5,2,1,4]$ & $x^6 + 10x^5 + 27x^4 + 55x^3 + 26x^2 + x$ \\
        & $[5,1,2,3,4]$ & $5x^6 + 11x^5 + 26x^4 + 51x^3 + 26x^2 + x$  \\
    \end{tabular}
    \label{tab:gf_examples}
\end{table}

Observe, as proven in Section \ref{sub:linquad}, that $[x^1]f_S(x) = 1$ and $[x^2]f_S(x) = A(n,1)$ for each strategy. The first strategy listed in each table segment is $CS$, and (of course) among these examples the cubic coefficient is maximum for $CS$ of length 4 and 5.

To count the permutations which are guessed by a strategy $S$ of length $n$ in exactly three guesses as we did before for one and two guessees, we break our game into cases according to which of the three guesses \textit{first} resulted in a correct entry. After this point, induction takes over and we will use the standard right shifting strategy to finish the game. In other words, for a game using strategy $S$ where the secret permutation is $\pi$, we want to identify $\rho_S(\pi) := \inf \{ i \mid \mathcal{J}_i \neq \emptyset \}$. Of course then $[x^3]f_S(x) = \sum_{i=1}^3 \left| \left\{ \pi \mid \rho_S(\pi) = i \right\} \right|$. We consider each case individually.

\begin{theorem} \label{thm:rho1}
    For any inductive strategy $S$ of length $n \geq 4$, $\left| \left\{ \pi \mid \rho_S(\pi) = 1 \right\} \right| = 1 - 2^{n + 1} + 3^n + \frac{n^2}{2} + \frac{5n}{2} - n\cdot 2^n$
\end{theorem}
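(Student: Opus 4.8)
\medskip
\noindent\textit{Proof idea.} Recall from the discussion preceding the theorem that in the decomposition $[x^3]f_S(x) = \sum_{i=1}^{3}\bigl|\{\pi : \rho_S(\pi) = i\}\bigr|$ the set $\{\pi : \rho_S(\pi) = 1\}$ means those $\pi$ that $S$ guesses in exactly three moves and for which $\gamma_1$ already contains a correct entry. The plan is to reduce this count to a classical excedance statistic controlled by the Kutin--Smithline Eulerian formula. First unwind the hypothesis: $\rho_S(\pi) = 1$ means exactly that $\gamma_1 = [1,2,\dots,n]$ has a correct entry, i.e.\ $\pi$ has at least one fixed point; let $\mathcal{J}_1$ be its fixed-point set and $m := |\mathcal{I}_1| = n - |\mathcal{J}_1|$. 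The key observation is that once a correct entry has appeared at $\gamma_1$, every later guess of an \textbf{inductive} strategy is generated by a cyclic-shift component $S[k] = CS[k]$ with $k < n$; in particular the top component $S[n]$ plays no role, and the rest of the game is \emph{exactly} the cyclic-shift strategy played on the $m$ positions of $\mathcal{I}_1$. Now $\pi$ permutes $\mathcal{I}_1$ onto itself without fixed points, and the order-isomorphic pattern of $\pi|_{\mathcal{I}_1}$ in $S_m$ is a derangement with the same number of excedances as $\pi$ (fixed points are never excedances, and order-isomorphism of positions and values within $\mathcal{I}_1$ preserves $\pi(i) > i$). Since cyclic shift guesses a permutation $\sigma$ in exactly $r$ moves iff $\sigma$ has $r-1$ excedances (the Eulerian statement quoted in Section~\ref{sub:linquad}), the whole game ends at move exactly $\mathrm{exc}(\pi) + 1$. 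Hence the permutations counted here are precisely the $\pi \in S_n$ with exactly two excedances \emph{and} at least one fixed point.

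Consequently the quantity to evaluate is $A(n,2) - g_{n,2}$, where $A(n,2)$ is the Eulerian number counting permutations of $[n]$ with two excedances and $g_{n,2}$ is the number of \emph{derangements} of $[n]$ with two excedances. For $A(n,2)$ I would use the standard closed form $A(n,2) = 3^n - (n+1)2^n + \binom{n+1}{2}$.

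For $g_{n,2}$ the plan is inclusion--exclusion on the fixed-point set. A permutation of $[n]$ with two excedances is the same data as a choice of its fixed points together with a fixed-point-free permutation with two excedances on the remaining elements, so $A(n,2) = \sum_{f=0}^{n}\binom{n}{f}\,g_{n-f,2}$; binomial inversion gives $g_{n,2} = \sum_{f=0}^{n}(-1)^f\binom{n}{f}\,A(n-f,2)$. Substituting the polynomial-times-geometric shape of $A(p,2)$ breaks this into three alternating binomial sums, each collapsing via $\sum_{f}(-1)^f\binom{n}{f}a^{n-f} = (a-1)^n$ together with $(n-f)\binom{n}{f} = n\binom{n-1}{f}$; carrying this out gives $g_{n,2} = 2^n - 2n - 1$ for $n \ge 3$ (sanity check: $g_{3,2} = 1$ and $g_{4,2} = 7$, the latter matching a direct count of the nine derangements in Table~\ref{tab:example}). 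Subtracting,
\[
A(n,2) - g_{n,2} = \bigl(3^n - (n+1)2^n + \tfrac{n(n+1)}{2}\bigr) - \bigl(2^n - 2n - 1\bigr) = 3^n - (n+2)2^n + \tfrac{n^2 + 5n}{2} + 1,
\]
which is exactly the claimed $1 - 2^{n+1} + 3^n + \tfrac{n^2}{2} + \tfrac{5n}{2} - n\cdot 2^n$. As a further check, at $n=4$ this gives $11 - 7 = 4$, consistent with $[x^3]f_{CS} = 11$ in Table~\ref{tab:gf_examples}.

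I expect the main obstacle to be making the first paragraph rigorous: one must argue carefully that $\rho_S(\pi) = 1$ forces the rest of the play to coincide, move for move, with cyclic shift on the smaller ground set $\mathcal{I}_1$ (the ``induction takes over'' step), and that the excedance count is an invariant of the passage $\pi \mapsto \pi|_{\mathcal{I}_1}$. Once that is in place the remainder is bookkeeping; the evaluation of $g_{n,2}$ is routine, and one could even avoid isolating it by noting that the sum over fixed-point counts telescopes, $\sum_{m=0}^{n-1}\binom{n}{m}\,g_{m,2} = A(n,2) - g_{n,2}$, though a handle on derangements-by-excedances is needed on either route.
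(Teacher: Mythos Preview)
Your argument is correct and shares the paper's key reduction: once $\mathcal{J}_1 \neq \emptyset$, an inductive strategy plays pure cyclic shift on $\mathcal{I}_1$, so the game lasts exactly three moves precisely when the restricted derangement has two excedances (and your remark that the order-isomorphism $\mathcal{I}_1 \to [m]$ preserves excedances is the right way to make that step precise). Where you diverge from the paper is in the bookkeeping. The paper stratifies by $k = |\mathcal{J}_1|$, invokes Lemma~\ref{lem:der2ex} to count length-$(n-k)$ derangements with two excedances as $2^{n-k} - (2(n-k)+1)$, and then hands the sum $\sum_{k=1}^{n-3}\binom{n}{k}\bigl(2^{n-k} - 2(n-k) - 1\bigr)$ to Maple for the closed form. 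You instead recognize the entire set globally as $\{\pi \in S_n : \mathrm{exc}(\pi) = 2,\ \pi \notin D_n\}$, write it as $A(n,2) - g_{n,2}$, and finish by hand using the explicit Eulerian formula together with $g_{n,2} = 2^n - 2n - 1$; the latter is exactly the content of the paper's Lemma~\ref{lem:der2ex}, though you reach it by binomial inversion rather than the paper's direct subset-counting argument. Your route is a bit more streamlined and avoids the computer-algebra step; the paper's stratified sum, on the other hand, makes the dependence on $|\mathcal{J}_1|$ explicit, which dovetails with how the surrounding $\rho_S(\pi) = 2,3$ cases are organized.
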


\begin{proof}
For $\rho_S(\pi) = 1$, we have that $\mathcal{J}_1 \neq \emptyset$, meaning that the first guess (the trivial permutation) has at least one common entry with the secret permutation $\pi$. Note that $1 \leq |\mathcal{J}_1| \leq n-3$. If $|\mathcal{J}_1| = n$, then the game ends in one guess, and it is not possible for $|\mathcal{J}_1| = n-1$ since there cannot be only one entry in an incorrect location. If $|\mathcal{J}_1| = n-2$, then $\gamma_2$ swaps the two incorrect entries from $\gamma_1$, which \textit{must} produce the correct permutation, hence the game ends in two guesses. Then for the game to end in exactly three guesses, we must have that $|\mathcal{J}_1| \leq n-3$.

Say that $|\mathcal{J}_1| = k$. Then $S[n-k]$, which is of course cyclic shifting, must guess the remaining $n-k$ positions in two more guesses. The sub-permutation of $\pi$ consisting of $n-k$ remaining elements is a derangement of those elements since none of those entries were correct in $\gamma_1$. Then it suffices to count the number of derangements of length $n-k$ which can be guessed by \textit{CS} in 3 guesses total. We know from \cite{kutin2024permutationwordle} that this is equivalent to the number of derangements of length $n-k$ with two excedances. Using Lemma \ref{lem:der2ex}, we conclude that there are $2^{n-k} - (2(n-k) + 1)$ such derangements.

So, we have that there are $\binom{n}{k}$ ways to choose $\mathcal{J}_1$, and $2^{n-k} - (2(n-k) + 1)$ permutations of the remaining elements which can be guessed in the remaining two guesses. Summing over all possible values of $k$, we have that there are $\sum_{k=1}^{n-3} \binom{n}{k} \left(2^{n-k} - (2n - 2k + 1) \right)$ permutations with $\rho_S(\pi) = 1$. With the aid of Maple, we obtain the closed form $1 - 2^{n + 1} + 3^n + \frac{n^2}{2} + \frac{5n}{2} - n\cdot 2^n$ for this sum of binomials, and this sequence has been added to the OEIS as entry \oeis{A385588}.
\end{proof}

Before we proceed, we must include the derangement-counting lemma. Derangements which are guessed by $CS$ in three guesses have two excedances, according to \cite{kutin2024permutationwordle}. Such derangements are counted by \oeis{A046739}, but we include the following argument since the approach used in the proof will also be utilized later on in this paper.

\begin{lemma} \label{lem:der2ex}
    There are $2^n - (2n+1)$ derangements of length $n$ which can be guessed by $CS$ in three guesses.
\end{lemma}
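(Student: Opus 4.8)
The plan is to turn the statement into a pure enumeration problem and then run a derangement-style recurrence on it. By the Kutin--Smithline fact quoted above, $CS$ guesses a permutation of $[n]$ in exactly $k$ guesses precisely when that permutation has $k-1$ excedances, so the derangements of $[n]$ that $CS$ finishes in exactly three guesses are exactly the derangements of $[n]$ with exactly two excedances. Write $e_n$ for their number; we must show $e_n = 2^n - (2n+1)$ for $n \ge 3$ (for $n \le 2$ there is no room for a third guess and the formula is not being asserted).

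Two small facts will be used throughout. First, excedance counts are invariant under order-isomorphic relabeling: if $\sigma$ stabilizes a subset $T$ of its domain, then $|\{i \in T \mid \sigma(i) > i\}|$ equals the number of excedances of the permutation of $\{1,\dots,|T|\}$ obtained by transporting $\sigma|_T$ along the order isomorphism; in particular deleting fixed points, or deleting a $2$-cycle, does not change the excedance count of what remains. Second, for every $m \ge 2$ there is a \emph{unique} derangement of an $m$-element ordered set with exactly one excedance (the left-to-right shift sending the top element to the bottom): in any derangement the least position is forced to be an excedance, and having only one excedance forces every other position to be a strict deficiency, which pins the whole permutation down inductively.

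Now we mimic the classical identity $|D_n| = (n-1)(|D_{n-1}| + |D_{n-2}|)$ while tracking excedances. Fix $\sigma \in D_n$ and let $q = \sigma^{-1}(n) \in \{1,\dots,n-1\}$, so position $q$ is automatically an excedance. In Case (i), $\sigma(n) = q$: then $(q\ n)$ is a $2$-cycle, the rest of $\sigma$ is a derangement of the other $n-2$ elements, and since that $2$-cycle contributes exactly one excedance, $\sigma$ has two excedances iff the remaining derangement has exactly one — of which there is a unique choice for $n \ge 4$ — so Case (i) contributes $n-1$. In Case (ii), $\sigma(n) \ne q$: the usual ``delete $n$ and redirect position $q$ to $\sigma(n)$'' map is a bijection onto $D_{n-1}$, and comparing excedances shows that the number of excedances of $\sigma$ equals that of its image $\sigma'$ plus $1$ if $\sigma'(q) < q$ and plus $0$ if $\sigma'(q) > q$. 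Splitting accordingly: if $q$ is an excedance of $\sigma'$ we need $\sigma'$ to have two excedances, and summing over $q$ interchanges the order of summation over excedance positions to give $2e_{n-1}$; if $q$ is a deficiency of $\sigma'$ we need $\sigma'$ to have one excedance, hence $\sigma'$ is the unique shift derangement of $[n-1]$ and $q$ ranges over its $n-2$ deficiency positions. Adding the three contributions gives $e_n = 2e_{n-1} + (2n-3)$ for $n \ge 4$, with $e_3 = 1$ checked by hand. Solving this linear recurrence (homogeneous part $2^n$, particular part $-2n-1$, constant fixed by $e_3=1$) yields $e_n = 2^n - 2n - 1$, as desired.

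The real work is Case (ii): correctly tracking how the redirection bijection moves the excedance originally at position $q$ — it survives or not according to the sign of $\sigma'(q) - q$ — and then recognizing that the surviving subcase collapses, after summing over $q$, to $2e_{n-1}$ by counting excedance positions on both sides. One must also mind the boundary value $n=3$, where Case (i) is vacuous since $D_1 = \emptyset$. A clean alternative that avoids the recurrence is to peel off fixed points instead: order-isomorphic restriction gives $A(n,2) = \sum_{m=0}^{n}\binom{n}{m} e_m$ (with $e_0 = e_1 = e_2 = 0$), so binomial inversion together with $A(m,2) = 3^m - (m+1)2^m + \binom{m+1}{2}$ reduces the whole problem to evaluating $\sum_m (-1)^{n-m}\binom{n}{m}3^m = 2^n$, $\sum_m (-1)^{n-m}\binom{n}{m}(m+1)2^m = 2n+1$, and $\sum_m (-1)^{n-m}\binom{n}{m}\binom{m+1}{2} = 0$ for $n \ge 3$.
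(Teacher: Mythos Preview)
Your proof is correct, but it takes a genuinely different route from the paper. The paper argues entirely inside the game: since the secret permutation is a derangement, $\mathcal{J}_1=\emptyset$ and $\gamma_2=[n,1,2,\dots,n-1]$; because $\gamma_3$ is then determined by $\mathcal{J}_2$, one simply counts the legal choices of $\mathcal{J}_2$. Of the $2^n$ subsets, the full set and the $n$ subsets of size $n-1$ are ruled out, and the $n$ complements $\mathcal{I}_2=\{i,i+1\}$ (cyclically) are ruled out because they would force $\gamma_3[i]=i$, contradicting $i\notin\mathcal{J}_1$. That gives $2^n-(2n+1)$ in a few lines.

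You instead translate to ``derangements with exactly two excedances'' via the Kutin--Smithline correspondence and then run an excedance-tracking version of the classical $|D_n|=(n-1)(|D_{n-1}|+|D_{n-2}|)$ bijection, obtaining the recurrence $e_n=2e_{n-1}+(2n-3)$ and solving it; your alternative binomial-inversion computation from the Eulerian numbers is also valid. The paper's argument is shorter and, crucially, its ``count legal $\mathcal{J}_2$'' template is exactly what is reused in Theorems~\ref{thm:cs_rho2}, \ref{thm:bestrho2}, \ref{thm:csl_rho2}, and \ref{thm:worstrho2}, so it does double duty. Your approach is heavier here but is the more portable one if one ever wants the analogous count for derangements with $k$ excedances for general $k$, where the direct subset count would become messier.
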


\begin{proof}
Consider a derangement of $[n]$ which can be guessed by $CS$ in exactly three guesses. For a secret permutation which is a derangement, we know that $\mathcal{J}_1$ is empty and that $\gamma_2 = n 12  \dots n-1$ after right shifting. Of course $\gamma_3$ is deterministic since our game ends after three guesses, so it suffices to count the number of legal possibilities for $\mathcal{J}_2$. For a permutation of length $n$, there are $2^n$ possible sets $\mathcal{J}_2$. We know that we cannot have $|\mathcal{J}_2| = n$ or the game ends with only two guesses, and $|\mathcal{J}_2| = n-1$ is impossible, so we subtract these possibilities to obtain $2^n - \binom{n}{n} - \binom{n}{n-1} = 2^n - (n + 1)$. 

Additionally, we cannot have that $\mathcal{I}_2 = \{i,(i\mod n)+1\}$. For simplicity of notation, we will refer to $(i \mod n) + 1$ as $i+1$, with the understanding that $i=n$ results in $i+1 = 1$. If $\mathcal{I}_2 = \{i,i+1\}$, we note that $i,i+1 \notin \mathcal{J}_1$, so $i$ and $i+1$ are both shifted right in $\gamma_2$. This means that $\gamma_2 [i+1] = i$. And since $\mathcal{I}_2 = \{i,i+1\}$, then $\gamma_3$ is generated by swapping $\gamma_2[i]$ and $\gamma_2[i+1]$. This places $i = \gamma_3[i]$, but this cannot possibly be the correct permutation since $i \notin \mathcal{J}_1$. Hence this scenario produces a contradiction. Observe that this contradiction is \textit{only} produced when $\mathcal{I}_2$ consists of two adjacent entries, otherwise entry $i$ in position $i+1$ would be swapped into some other position which is not $i$, which is legal.

There are $\binom{n}{1}$ ways to choose $i$ in this scenario, so there are an additional $n$ possible sets which cannot form $\mathcal{J}_2$. This leaves us with $2^n - (2n+1)$ possible derangements with two excedances.

\end{proof}

Observe that Theorem \ref{thm:rho1} and the subsequent proof are entirely independent of the strategy component $S[n]$ since the $n$-th strategy component is never utilized. This case relies purely on induction, meaning that new guesses are generated only by using the lower index, cyclic shifting components of the strategy. We will temporarily skip the case $\rho_S(\pi) = 2$ and first consider the case of $\rho_S(\pi) = 3$ as it behaves similarly to this previous case. Note that the following claim is actually stronger than we need it to be since it covers all \textit{cyclic} strategies, not just the subset of inductive ones. 

\begin{theorem} \label{thm:rho3}
    For any cyclic strategy $S$ of length $n \geq 4$, $\left| \left\{ \pi \mid \rho_S(\pi) = 3 \right\} \right| = 1 $
\end{theorem}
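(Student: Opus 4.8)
The plan is to determine what the game must look like in the regime $\rho_S(\pi)=3$ and to show that this pins $\pi$ down to a single explicit permutation. Recall that a permutation $\pi$ enters $[x^3]f_S(x)$ through this case exactly when $\mathcal{J}_1=\mathcal{J}_2=\emptyset$, $\mathcal{J}_3\neq\emptyset$, \emph{and} the strategy guesses $\pi$ in exactly three rounds, i.e.\ $\gamma_3=\pi$; so I must show there is precisely one such $\pi$.

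The first step is to record the dynamics. If $\mathcal{J}_1=\mathcal{J}_2=\emptyset$ then $\mathcal{I}_1=\mathcal{I}_2=[n]$, so the first two transitions are both produced by the top component $\sigma:=S[n]$, which is an $n$-cycle. (This is the only place the cyclicity hypothesis is used; in fact the argument needs only that $S[n]$ is an $n$-cycle, not anything about the lower components, which is why the statement is stronger than required.) As observed in the proof of Proposition~\ref{prop:derange}, $\gamma_1=[1,2,\dots,n]$ gives $\gamma_2=\sigma^{-1}$, and iterating the same construction gives $\gamma_3=\gamma_2\circ\sigma^{-1}=\sigma^{-2}$ (and since $\gamma_2=\sigma^{-1}$ commutes with $\sigma^{-1}$, the opposite composition convention yields the same $\gamma_3$). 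Because the game ends at the third guess, $\pi=\gamma_3=\sigma^{-2}$, so at most one permutation falls into this case.

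Next I would confirm that $\pi=\sigma^{-2}$ really does have $\rho_S(\pi)=3$. For $\mathcal{J}_1=\emptyset$ I need $\sigma^{-2}$ to be a derangement: as $\sigma$ is an $n$-cycle, $\sigma^{\pm 2}$ decomposes into $\gcd(2,n)$ cycles each of length $n/\gcd(2,n)\geq n/2\geq 2$ since $n\geq 4$, hence has no fixed point. For $\mathcal{J}_2=\{\,i:\sigma^{-1}(i)=\sigma^{-2}(i)\,\}=\emptyset$, substitute $j=\sigma^{-1}(i)$ to rewrite the condition as $j=\sigma^{-1}(j)$, which is impossible because $\sigma^{-1}$ is an $n$-cycle with $n\geq 2$. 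Thus $\mathcal{J}_1=\mathcal{J}_2=\emptyset$, and $\gamma_3=\sigma^{-2}=\pi$ gives $\mathcal{J}_3=[n]\neq\emptyset$, so $\rho_S(\pi)=3$ and the game indeed ends in exactly three rounds. Combining the two directions yields $\left|\{\pi:\rho_S(\pi)=3\}\right|=1$.

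The proof is short and I do not anticipate a genuine obstacle; the only points needing care are fixing the composition order so that $\gamma_3=\sigma^{-2}$, and observing that the hypothesis $n\geq 4$ is used solely to keep $\sigma^{\pm 2}$ fixed-point-free (the claim would fail for $n\leq 2$).
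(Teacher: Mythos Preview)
Your proof is correct and follows essentially the same approach as the paper: both observe that $\mathcal{J}_1=\mathcal{J}_2=\emptyset$ forces $\gamma_3$ to be the trivial permutation with $S[n]$ applied twice (your $\sigma^{-2}$), so $\pi$ is uniquely determined, and then verify that this $\pi$ really is a legal game by checking $\gamma_1,\gamma_2$ share no positions with $\gamma_3$. The paper phrases the last check as ``$S[n]$ is an $n$-cycle of length $\geq 3$, hence contains no $1$- or $2$-cycles,'' which is equivalent to your cycle-length computation via $\gcd(2,n)$; your write-up is a bit more explicit about the algebra and about separating the ``at most one'' and ``at least one'' directions, but the content is the same.
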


\begin{proof}
For $\rho_S(\pi) = 3$, we have that $\mathcal{J}_1, \mathcal{J}_2 = \emptyset$, but $\mathcal{J}_3 \neq \emptyset$. For the game to end at guess three, we need $\mathcal{J}_3 = [n]$. If $\mathcal{J}_1, \mathcal{J}_2 = \emptyset$, then $\gamma_3$ consists of the trivial permutation of length $n$ with $S[n]$ applied twice, and since $\mathcal{J}_3 = [n]$, we have that $\pi = \gamma_3$. This is the only possible outcome. There also cannot be an $i \in [n]$ such that $\gamma_1[i]$ or $\gamma_2[i] = \gamma_3[i]$ since the component $S[n]$ must be a cyclic permutation of length at least 3, and thus it cannot have any 1- or 2-cycles in it which would allow an element $i$ to return to the same location after $S[n]$ is applied twice. Thus this is a legal game with only one possible permutation.
\end{proof}

In the previous two cases, the number of permutations was independent of the strategy component $S[n]$. But we know experimentally (in Table \ref{tab:gf_examples}, for instance) that $[x^3]f_S(x)$ is not a constant for all of our inductive strategies, and since $[x^3]f_S(x) = \sum_{i=1}^3 \left| \left\{ \pi \mid \rho_S(\pi) =i \right\} \right|$, we conclude that the adjudicator is the so far unexamined set $\{ \pi \mid \rho_S(\pi) = 2\}$. First, consider the following:

\begin{theorem} \label{thm:cs_rho2}
    For the cyclic shifting strategy $CS$ of length $n \geq 4$, $\left| \left\{ \pi \mid \rho_{CS}(\pi) = 2 \right\} \right| = 2^n - 2n - 2$
\end{theorem}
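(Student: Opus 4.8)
The plan is to count permutations $\pi$ of length $n$ with $\rho_{CS}(\pi) = 2$, i.e. those with $\mathcal{J}_1 = \emptyset$ but $\mathcal{J}_2 \neq \emptyset$, that are nonetheless \emph{not} guessed within two guesses by $CS$ (those with $\mathcal{J}_2 = [n]$ are already counted in $[x^2]f_{CS}(x)$, so we want those where the game ends at guess three). Since $\mathcal{J}_1 = \emptyset$ forces $\pi$ to be a derangement and hence $\gamma_2 = [n,1,2,\dots,n-1]$ deterministically, the relevant data is the set $\mathcal{J}_2 \subseteq [n]$ of positions where $\gamma_2$ agrees with $\pi$. I would set up the count as: (number of derangements guessed by $CS$ in at most three guesses) minus (number guessed in at most two), and then read off the $\rho = 2$ piece, but it is cleaner to count directly the legal sets $\mathcal{J}_2$ with $\emptyset \neq \mathcal{J}_2 \neq [n]$, i.e. the same enumeration of legal $\mathcal{J}_2$ as in Lemma \ref{lem:der2ex} but now requiring $\mathcal{J}_2 \neq \emptyset$ as well.

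The key steps, in order: First, recall from the proof of Lemma \ref{lem:der2ex} that $\gamma_3$ is determined by $\mathcal{J}_2$ (equivalently by $\mathcal{I}_2$), and that the number of subsets of $[n]$ available as $\mathcal{J}_2$, after discarding the forbidden cases $|\mathcal{J}_2| = n$ and $|\mathcal{J}_2| = n-1$, and the $n$ ``adjacent-pair'' cases $\mathcal{I}_2 = \{i, i+1\}$ (indices mod $n$), is $2^n - (2n+1)$; these are exactly the derangements $CS$ solves in $\le 3$ guesses. Second, observe that this total $2^n - (2n+1)$ includes the case $\mathcal{J}_2 = \emptyset$ (which is $\rho_{CS}(\pi) = 3$, counted once by Theorem \ref{thm:rho3}) but that every other case has $\mathcal{J}_2 \neq \emptyset$, so the number with $\rho_{CS}(\pi) = 2$ among these is $2^n - (2n+1) - 1 = 2^n - 2n - 2$. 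Third — the one point needing care — I must check that the constraint defining $\rho_{CS}(\pi) = 2$ is \emph{precisely} ``$\mathcal{J}_2 \neq \emptyset$ and the game does not already end at guess two,'' i.e. that among the legal $\mathcal{J}_2$ with $\emptyset \subsetneq \mathcal{J}_2 \subsetneq [n]$ there is no hidden overcounting: distinct such $\mathcal{J}_2$ give distinct secret permutations $\pi$ (true, since $\pi$ is recovered as the derangement agreeing with $\gamma_2$ exactly on $\mathcal{J}_2$ and whose restriction to $\mathcal{I}_2$ equals $\gamma_3|_{\mathcal{I}_2}$, which must be a valid derangement — but we do not even need $\gamma_3$ here, since $\mathcal{J}_2$ plus ``$\pi$ is a derangement'' already overdetermines nothing: actually we need $\pi$ to be a genuine derangement, which is automatic because $\gamma_2$ itself is a derangement and agreeing with it on some positions keeps those positions non-fixed, while the excedance/cycle argument from \cite{kutin2024permutationwordle} guarantees the remaining positions can be completed to a derangement solved in three guesses). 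So each legal nonempty $\mathcal{J}_2$ corresponds to exactly one $\pi$ with $\rho_{CS}(\pi) = 2$ and $[x^3]$-contribution.

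The main obstacle I anticipate is the third step: making rigorous that the map from ``legal $\mathcal{J}_2$'' to ``secret permutation $\pi$'' is a bijection onto $\{\pi : \rho_{CS}(\pi) = 2, \text{ game ends at guess } 3\}$. One must verify (a) every such $\pi$ arises from \emph{some} legal $\mathcal{J}_2$ — clear, take $\mathcal{J}_2 = \{i : \gamma_2[i] = \pi[i]\}$, which is nonempty by $\rho_{CS}(\pi) = 2$, not all of $[n]$ since the game doesn't end at guess 2, and not of the forbidden forms since otherwise the game would end at guess 2 or $\mathcal{J}_1$ would be nonempty — and (b) injectivity, which follows because $\gamma_3$ is a deterministic function of $\mathcal{J}_2$ and the game ending at guess 3 means $\pi = \gamma_3$. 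Once this correspondence is pinned down, the arithmetic $2^n - (2n+1) - 1 = 2^n - 2n - 2$ is immediate, and cross-checking against Table \ref{tab:gf_examples} (for $n=4$: $2^4 - 8 - 2 = 6$; combined with Theorem \ref{thm:rho1} giving $1$ and Theorem \ref{thm:rho3} giving $1$, this yields $[x^3]f_{CS}(x) = 8$? — one should double-check the bookkeeping against the tabulated value $11$, reconciling any off-by-something by recalling that Theorem \ref{thm:rho1}'s formula at $n=4$ gives $1 - 2^5 + 3^4 + 8 + 10 - 64 = 4$, so $4 + 6 + 1 = 11$, consistent) confirms the count.
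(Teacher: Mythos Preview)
Your proposal is correct and follows essentially the same approach as the paper: both count legal sets $\mathcal{J}_2$ by starting from $2^n$ and removing the forbidden cases $|\mathcal{J}_2|\in\{n,n-1\}$ and the $n$ adjacent-pair cases $\mathcal{I}_2=\{i,i+1\}$, then also excluding $\mathcal{J}_2=\emptyset$. The only cosmetic difference is that you route through Lemma~\ref{lem:der2ex} and subtract one from $2^n-(2n+1)$, whereas the paper re-derives the count from scratch with $\binom{n}{0}$ removed up front; you are also more explicit than the paper about why the map $\mathcal{J}_2\mapsto\pi=\gamma_3$ is a bijection, which is a point the paper simply asserts.
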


\begin{proof}
In this case where $\rho_S(\pi) = 2$, $\mathcal{J}_1 = \emptyset$, so $\gamma_2 = [n,1,2,\dots,n-1]$ after a single right shift. As before, note that $|\mathcal{J}_2| \neq n, n-1$, and that $\gamma_3$ is deterministic since the game ends in three guesses, so it suffices to count the number of legal possible sets $\mathcal{J}_2$. Once again, notice that there cannot be two adjacent entries $i,i+1 \in \mathcal{I}_2$ (simplifying our notation as in Lemma \ref{lem:der2ex}.) If such an $i$ existed, then $\gamma_2[i+1] = i$, and by the same argument as before the entry $i$ is right shifted back around so that $\gamma_3[i] = i$. But again, this produces a contradiction since we assumed that the game ended in three guesses. 

Any other subset of $[n]$ represents a legal possibility for $\mathcal{J}_2$, so we count them as follows. There are $2^n$ total possible subsets, but we cannot have subsets of size 0 or of size $n$ or $n-1$. Then we have $2^n - \binom{n}{0} - \binom{n}{n} - \binom{n}{n-1} = 2^n - 1 - 1 - n = 2^n - n - 2$. We must also subtract off subsets of size two which consist of adjacent pairs $\{i,i+1\}$. There are $\binom{n}{1} = n$ choices for $i$, so we subtract these off as well and obtain the desired result: $2^n - n - 2 - n = 2^n - 2n - 2$.
    
\end{proof}

To demonstrate the optimality of $CS$ compared to any other inductive strategy $S$ in games lasting for only three guesses, we aimed to show that $[x^3]f_S(x) < [x^3]f_{CS}(x)$. But with the previous two calculations in hand, we actually have that $\left| \left\{ \pi \mid \rho_{S}(\pi) = i \right\} \right|$ are fixed when $i \in \{ 1,3\}$ for any inductive strategy of length $n$. It then suffices to show that $\left| \left\{ \pi \mid \rho_{S}(\pi) = 2 \right\} \right| < 2^n - 2n - 2$ for any arbitrary $S$. 

\begin{theorem} \label{thm:bestrho2}
    For any inductively constructed strategy $S$ of length $n \geq 4$, $\left| \left\{ \pi \mid \rho_{S}(\pi) = 2 \right\} \right| < 2^n - 2n - 2$
\end{theorem}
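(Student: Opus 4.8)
The plan is to mirror the counting argument used for $CS$ in Theorem~\ref{thm:cs_rho2}, but now tracking how the arbitrary final component $S[n]$ behaves under one extra application. For an inductive strategy $S$ of length $n$ and a secret permutation $\pi$ with $\rho_S(\pi)=2$, we have $\mathcal J_1=\emptyset$, so $\gamma_2 = (S[n])^{-1}$ applied to the trivial permutation; equivalently, writing $\sigma = S[n]$, the set $\mathcal J_2$ is an arbitrary subset of $[n]$ that must \emph{not} equal $[n]$, must not have size $n-1$, must not be empty, and — crucially — must avoid every ``bad'' configuration in which, after locking $\mathcal J_2$ and applying $\sigma$ restricted to $\mathcal I_2$, some index returns to a position already known (from $\gamma_1$ or $\gamma_2$) to be wrong. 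So I would define, for each subset $I\subseteq[n]$ of size at least $3$, the event that $I$ is a \emph{legal} value of $\mathcal I_2$: namely that the permutation obtained from $\gamma_2$ by permuting the positions of $I$ according to $\sigma|_I$ (i.e.\ the induced cyclic-type action of $\sigma$ on the $|I|$ incorrect slots) has \emph{all} of $I$ correct and does not re-use a known-wrong position. Counting legal $I$ gives $|\{\pi : \rho_S(\pi)=2\}|$ exactly, so the whole statement reduces to: the number of legal subsets is maximized precisely when $\sigma = CS[n]$.

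The key structural observation to isolate is the analogue of the ``adjacent pair'' obstruction from Lemma~\ref{lem:der2ex}. When $\mathcal I_2 = I$, the second guess $\gamma_2$ already has every position outside $I$ correct and every position in $I$ taking the ``shifted'' value coming from $\gamma_1$ composed with the lower cyclic-shift components; the third guess re-permutes only the slots in $I$ using $\sigma$. The game fails to end at guess three exactly when this third guess puts some element back into a position it already occupied in $\gamma_1$ or $\gamma_2$. I would show that, because $\sigma$ is a single $n$-cycle and the lower components are all $CS$, this happens if and only if $I$ is one of a small explicit family of ``forbidden'' sets determined by the cycle structure of $\sigma$ relative to the identity — for $CS$ these are exactly the $n$ cyclically-adjacent pairs $\{i,i+1\}$, giving the count $2^n-2n-2$. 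For a general $n$-cycle $\sigma$ I would argue that the collection of forbidden sets always has size \emph{at least} $n$, with equality only for $\sigma = CS[n]$ (and its mirror image, which is excluded or handled separately since the inductive lower components are right-shifting). Concretely, every pair $\{i,j\}$ with $\sigma(i)=j$ or $\sigma(j)=i$ contributes a forbidden set whenever the relevant lower-level shift sends one of them back; a non-cyclic-shift $n$-cycle either creates \emph{more} such small forbidden pairs, or creates forbidden sets of size $3$, or — the subtle case — causes two distinct forbidden sets to coincide, which one must rule out or account for.

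The concrete route I would take: (1) fix notation for the action of $S[n]$ on the incorrect block and write down, for a candidate $I$, the exact condition ``$\gamma_3$ restricted to $I$ is the identity and no position in $I$ is re-used''; (2) observe the size constraints force $|I|\in\{3,\dots,n-2\}\cup\{n\}$ minus the winning cases, reproducing the crude bound $2^n-n-2$ of Theorem~\ref{thm:cs_rho2}; (3) enumerate the forbidden sets for general $\sigma$ and show there are always $\ge n$ of them; (4) characterize equality. Step (3)–(4) is the real obstacle: one must prove a genuinely combinatorial extremal fact about which $n$-cycles minimize the number of ``immediately self-defeating'' subsets, and the delicate part is that different descriptions of forbidden sets (short $2$-element ones from near-transpositions in $\sigma$, versus $3$-element ones from the interaction of $\sigma$ with the fixed lower-level shifts) could overlap, so a naive ``count each source separately'' argument overcounts or undercounts. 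I would try to handle this by assigning to each $n$-cycle $\sigma \ne CS[n]$ an \emph{injection} from the $n$ forbidden sets of $CS$ into the forbidden sets of $\sigma$, plus one extra forbidden set not in the image, thereby getting a strict inequality; failing a clean injection, I would fall back on case analysis over the cycle structure of $\sigma \cdot CS[n]^{-1}$, since $CS[n]$ is the ``closest'' $n$-cycle to the identity in the sense that its associated shift has no short returns.
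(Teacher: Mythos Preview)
Your overall plan---count legal $\mathcal I_2$'s, show there are at least $n$ forbidden ones, and exhibit one extra when $S[n]\neq CS[n]$---is exactly the shape of the paper's argument. But you have a genuine mechanical confusion that, if carried through, would derail the count. You repeatedly say that $\gamma_3$ is obtained from $\gamma_2$ by ``applying $\sigma$ restricted to $\mathcal I_2$'' (where $\sigma=S[n]$). That is not what happens: since $|\mathcal I_2|<n$ and the strategy is inductive, the third guess uses the lower component $S[|\mathcal I_2|]=CS[|\mathcal I_2|]$, i.e.\ an ordinary right cyclic shift on the positions of $\mathcal I_2$ in their natural order on $[n]$. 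The cycle $\sigma$ enters only through $\gamma_2=(S[n])^{-1}$, which fixes the \emph{contents} of those positions; it plays no role in how they are re-permuted. (Relatedly, your restriction to $|I|\ge 3$ is off: $|\mathcal I_2|=2$ is allowed and is where all the action is.)

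Once you use the correct mechanics, the forbidden-set analysis becomes much simpler than the injection or case-analysis you propose. Since $\gamma_2[\sigma(j)]=j$ for every $j$, the set $\mathcal I_2=\{j,\sigma(j)\}$ is \emph{always} illegal: the two entries swap, so $\gamma_3[j]=\gamma_2[\sigma(j)]=j$, contradicting $\mathcal J_1=\emptyset$. Because $\sigma$ is a single $n$-cycle with $n\ge 4$, the $n$ pairs $\{j,\sigma(j)\}$ are pairwise distinct (equality would force $\sigma^2(j)=j$), so you immediately get $\le 2^n-2n-2$ for \emph{every} inductive $S$---no ``whenever the relevant lower-level shift sends one of them back'' caveat, no overlap worries. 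For the strict inequality, the paper simply observes that if $\sigma\neq CS[n]$ then some $j$ has $k:=\sigma(j)\neq j+1$; any superset of $\{j,k\}$ obtained by adding only indices strictly between $j$ and $k$ is still illegal (right-shift from position $k$ skips the locked positions $k+1,\dots,j-1$ and lands entry $j$ back at position $j$), and such a set has size $\ge 3$, hence is not one of the $n$ pairs already removed. That single extra set gives the strict inequality, so no injection from the $CS$ forbidden family and no case analysis on $\sigma\cdot CS[n]^{-1}$ is needed.
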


\begin{proof}
As before, we want to count the number of possible $\mathcal{J}_2$ subsets. Note, of course, that the number of these subsets cannot exceed $2^n - n - 2$ since regardless of the strategy used, $\mathcal{J}_2$ must have a size at least 1 and no bigger than $n-2$ for $\rho_S(\pi) = 2$ in a game ending after three guesses. Then we must count the number of outlying subsets which are also subtracted. 

Recall that since $\mathcal{J}_1 = \emptyset$, $\gamma_2 = (S[n])^{-1}$, and since $S[n]$ is a cyclic permutation (and hence a derangement), the inverse is also a derangement. In the case of standard cyclic shift studied before, the game wouldn't end after three guesses if $\mathcal{J}_2 = \{i, (i \mod n)+1 \}$ since entry $i$ would cycle back around to position $i$ in $\gamma_3$. Now, let $i$ be in position $j$ of $\gamma_2$, noting that $i \neq j$ since $\gamma_2$ is a derangement. Similarly, let $k$ be the position of entry $j$ in $\gamma_2$. Then if $\mathcal{J}_2 = \{ j,k \}$, then entry $j$ is shifted back to position $j$ in $\gamma_3$ when cyclic shift takes over, which produces a contradiction as before since the game cannot end in three guesses. 

This is also true for any superset of $\{j,k\}$ which contains only indices between $j$ and $k$, as the entry $j$ will still circle back around to index $j$. For example, if $j = 2$ and $k=5$, then the sets $\{2,3,5\}$ and $\{2,4,5\}$ are also illegal since entry 2 in position 5 will right shift back to position 2. (This was not the case for standard cyclic shift since $j = i+1$ and thus for entry $i$ to cycle back to position $i$, the subset $\mathcal{J}_2$ \textit{had} to have size exactly 2.) Clearly for every choice of $j$ (of which there are $n$ total), there is at least one set subtracted, so we have at \textit{most} $2^n - 2n - 2$ legal sets $\mathcal{J}_2$. However, there must be at least one choice of $j$ such that $k \neq j+1$, and hence there are sets containing $\{j,k\}$ and elements between $j$ and $k$ which must also be subtracted. If there were no such $j$, then $\gamma_2[j+1] = j$ for all $j \in [n]$, and this would make $S[n] = [2,3,\dots, n,1]$, which is a contradiction to $S \neq CS$. Hence, we must eliminate at least one more illegal subset $\mathcal{J}_2$, and thus we have a total number of legal sets $\mathcal{J}_2$ which is \textit{strictly less} than $2^n - 2n - 2$. So for an inductive strategy $S$ where $S[n] \neq [2,3,\dots,n,1]$, $\left| \left\{ \pi \mid \rho_{S}(\pi) = 2 \right\} \right| < 2^n - 2n - 2$, as desired.

\end{proof}

Thus, any inductive strategy has fewer permutations which can be guessed in three guesses (such that the first correct entry is obtained on guess number two) than standard cyclic shift does. Since we proved that $\left| \left\{ \pi \mid \rho_{S}(\pi) = 2 \right\} \right| < \left| \left\{ \pi \mid \rho_{CS}(\pi) = 2 \right\} \right|$, we also get that $[x^3]f_S(x) < [x^3]f_{CS}(x)$, meaning that cyclic shift performs optimally for games lasting only three guesses when compared to other inductively constructed strategies.

\subsection{Worst Cubic Coefficients.} \label{sub:worstcubic}

In the same context as the previous section, the inductive strategy whose performance is \textit{least} optimal is the strategy with $S[n] = [n,1,2,\dots,n-1]$, i.e. which \textit{left} shifts for a permutation of length $n$, but then inductively \textit{right shifts} for all smaller sub-permutations. We will refer to this strategy as $CSL$. Notice that Table \ref{tab:gf_examples} includes the generating functions for $CSL$ of lengths 4 and 5 in the last rows of each subsection, respectively. In fact, we can use Maple to experimentally generate the cubic coefficients for $CSL$ of lengths 3 through 8 as follows: $1,7,51,263,1100,4093$. While the cubic coefficients for $CS$ were equal to $A(n,2)$, this sequence does not have as neat of a closed form definition. But we can still calculate $[x^3]f_{CSL}(x)$ by considering the same three cases from the previous section, and we will do so in this section.

At first glance, we \textit{do} notice that for $n \geq 4$, each of these cubic coefficients is strictly smaller than $A(n,2)$. This indicates that there are strictly more permutations which $CS$ can guess in exactly three guesses than $CSL$ can. After comparing with the cubic coefficients of \textit{all} other inductive strategies, we descry the following:

\begin{theorem} \label{thm:worstcubic}
    For all strategy lengths $n \geq 4$, $[x^3]f_{CSL}(x) < [x^3]f_{S}(x)$ for all inductive strategies $S$.
\end{theorem}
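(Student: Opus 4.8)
The plan is to run the same three-case split that organizes this section, $[x^3]f_S(x)=\sum_{i=1}^{3}\lvert\{\pi:\rho_S(\pi)=i\}\rvert$, and to use the fact --- already extracted inside the proofs of Theorem~\ref{thm:rho1} and Theorem~\ref{thm:rho3} --- that the $i=1$ and $i=3$ counts are the same for \emph{every} inductive strategy of length $n$ (they never invoke $S[n]$). So the theorem reduces to the single inequality $\lvert\{\pi:\rho_{CSL}(\pi)=2\}\rvert<\lvert\{\pi:\rho_{S}(\pi)=2\}\rvert$ for every inductive $S\neq CSL$. As in Theorem~\ref{thm:bestrho2}, the quantity $\lvert\{\pi:\rho_{S}(\pi)=2\}\rvert$ counts the ``legal'' sets $\mathcal I_2$: those with $2\le\lvert\mathcal I_2\rvert\le n-1$ for which right-shifting $\mathcal I_2$ inside $\gamma_2=(S[n])^{-1}$ yields a derangement. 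Thus I want to show that, among inductive strategies, $CSL$ admits the fewest legal sets $\mathcal I_2$.

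First I would record the legality test. Because $\gamma_2$ is already a derangement, a new fixed point after the right-shift can only appear at a position $q\in\mathcal I_2$, and a short computation in the spirit of Lemma~\ref{lem:der2ex} shows that $\mathcal I_2$ is illegal exactly when there is a $q\in\mathcal I_2$ with $\gamma_2(q)\in\mathcal I_2$ and no element of $\mathcal I_2$ strictly inside the forward index-arc from $q$ to $\gamma_2(q)$. For $CSL$ we have $S[n]=[n,1,\dots,n-1]$, so $\gamma_2(q)=q+1$ cyclically for every $q$ and that arc is empty; hence $\mathcal I_2$ is illegal for $CSL$ precisely when it contains two cyclically consecutive indices, so $\lvert\{\pi:\rho_{CSL}(\pi)=2\}\rvert$ is the number of subsets of $\mathbb{Z}/n$ of size between $2$ and $n-1$ with no two cyclically consecutive elements --- equivalently the independent sets of size $\ge 2$ of the cycle $C_n$, which number $L_n-n-1$ with $L_n$ the $n$-th Lucas number (the exact value is not needed). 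For a general inductive $S$ some of the arcs above are nonempty, so morally fewer $\mathcal I_2$ can be illegal; the content of the theorem is making this rigorous.

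For that I would construct an injection $\Phi$ from the illegal sets of $S$ into the illegal sets of $CSL$. The recipe: if an $S$-illegal set $X$ already contains two cyclically consecutive indices, put $\Phi(X)=X$; otherwise $X$ is sparse, and for its (say, lexicographically least) witness $q$ we have $q+1\notin X$ by sparseness, while $\lvert X\rvert\le\lfloor n/2\rfloor$ forces $\lvert X\cup\{q+1\}\rvert\le n-1$, so $X\cup\{q+1\}$ contains the consecutive pair $\{q,q+1\}$ and is $CSL$-illegal; set $\Phi(X)=X\cup\{q+1\}$. Injectivity would be proved by reconstructing from $\Phi(X)$ both the added index and the set it came from --- using that a witness must lie in its own set, which discards the spurious candidate preimages --- after fixing a tie-breaking convention that prevents the two clauses of $\Phi$ from ever producing the same image. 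Finally $\Phi$ is not onto: since $S\neq CSL$ there is a $q_0$ with $\gamma_2(q_0)\neq q_0+1$, and from such a disagreement one names a $CSL$-illegal set lying outside the image of $\Phi$ (typically the consecutive pair $\{q_0,q_0+1\}$, and in the extreme case $S=CS$ --- where the only $S$-illegal sets are the $n$ consecutive pairs --- a three-element run). As both families sit inside the same ground set of $2^n-n-2$ relevant subsets, the strict inequality $\lvert\mathrm{illegal}_S\rvert<\lvert\mathrm{illegal}_{CSL}\rvert$ gives $\lvert\{\pi:\rho_S(\pi)=2\}\rvert>\lvert\{\pi:\rho_{CSL}(\pi)=2\}\rvert$, and adding back the equal $\rho=1$ and $\rho=3$ contributions completes the proof.

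I expect the real obstacle to be the construction and verification of $\Phi$: choosing the witness canonically so that $\Phi$ is genuinely injective is delicate, because $X\cup\{q+1\}$ can carry two consecutive pairs and may even be $S$-illegal in its own right, so the two clauses must be carefully reconciled; the edge case $S=CS$ also wants separate attention. If $\Phi$ proves too fussy, the alternative is inclusion-exclusion over the union $\bigcup_{q}\mathcal F_q$ of the forbidden families $\mathcal F_q=\{X:q,\gamma_2(q)\in X,\ X\cap A_q=\emptyset\}$, where $A_q$ is the open arc from $q$ to $\gamma_2(q)$: one argues termwise that emptying every $A_q$ --- precisely what $S[n]=[n,1,\dots,n-1]$ does --- maximizes each intersection $\lvert\bigcap_{q\in T}\mathcal F_q\rvert$, hence the whole union, so $CSL$ maximizes $\lvert\mathrm{illegal}_S\rvert$. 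That route is more mechanical but heavier on computation.
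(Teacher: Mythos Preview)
Your reduction is exactly the paper's: split $[x^3]f_S(x)$ by $\rho_S(\pi)\in\{1,2,3\}$, note (via Theorems~\ref{thm:rho1} and~\ref{thm:rho3}) that the $i=1,3$ counts are strategy-independent, and reduce to comparing $\lvert\{\pi:\rho_S(\pi)=2\}\rvert$. Your legality criterion for $\mathcal I_2$ and the computation $\lvert\{\pi:\rho_{CSL}(\pi)=2\}\rvert=L_n-n-1$ also match the paper.

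Where you diverge is the comparison step. The paper does not build an injection on illegal sets. Instead it rewrites $\gamma_2=(S[n])^{-1}$ in \emph{cycle notation} as a sequence $p$, and observes that the ``forbidden pairs'' $\{j,k\}$ with $\gamma_2[k]=j$ are exactly the cyclically adjacent pairs of $p$. Since $p$ is just a relabeling of the $n$-cycle, the subsets of size $2,\dots,n-1$ avoiding all such pairs number $L_n-n-1$ \emph{for every} inductive $S$, and all of these are legal. This gives $\lvert\{\pi:\rho_S(\pi)=2\}\rvert\ge L_n-n-1$ immediately, with no bookkeeping. Strictness then comes from exhibiting one additional legal set $\{j,k,m\}$ where $j,k$ are $p$-adjacent but $m$ sits in the index-arc from $k$ to $j$; such an $m$ exists precisely when some $k\neq j-1$, i.e.\ when $S\neq CSL$.

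Your injection $\Phi$ is a plausible alternative, but as you yourself flag, making it injective is genuinely tricky: the image $X\cup\{q+1\}$ may already be $S$-illegal (so it is also a case-1 preimage of itself), and your non-surjectivity witness $\{q_0,q_0+1\}$ can be $S$-illegal when $\gamma_2(q_0+1)=q_0$, so it lands in the image after all. These are fixable but require real work. The paper's cycle-notation trick sidesteps all of this: the lower bound is obtained by a one-line symmetry observation rather than a constructed bijection, which is the idea you are missing.
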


Note, as before, that this claim is equivalent to the statement $\left| \left\{ \pi \mid \rho_{CSL}(\pi) = 2 \right\} \right| < \left| \left\{ \pi \mid \rho_{S}(\pi) = 2 \right\} \right|$ for any inductive strategy $S$. Then we first count the permutations on the left hand side of our inequality as follows:

\begin{theorem} \label{thm:csl_rho2}
    For the inductive strategy $CSL$ of length $n \geq 4$, $\left| \left\{ \pi \mid \rho_{CSL}(\pi) = 2 \right\} \right| = L_n - n - 1$, where $L_n$ is the $n$-th Lucas number.
\end{theorem}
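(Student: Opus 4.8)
The plan is to follow the route of the proof of Theorem~\ref{thm:cs_rho2}: reduce the quantity to a count of admissible sets $\mathcal{J}_2$ (equivalently $\mathcal{I}_2$), then identify exactly which sets are ruled out. When $\rho_{CSL}(\pi)=2$ we have $\mathcal{J}_1=\emptyset$, so $\pi\in D_n$ and $\gamma_2=(CSL[n])^{-1}$. Since $CSL[n]=[n,1,2,\dots,n-1]$, a direct computation gives $(CSL[n])^{-1}=[2,3,\dots,n,1]$, i.e.\ $\gamma_2[t]=t+1$ for $1\le t\le n-1$ and $\gamma_2[n]=1$. From the second guess on the strategy is right cyclic shift, so $\gamma_3$ is determined by $\mathcal{I}_2$, and since the game ends after three guesses, $\pi=\gamma_3$. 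This map $\mathcal{I}_2\mapsto\pi$ is injective, because $\mathcal{I}_2$ is recovered from $\pi$ as the set of positions where $\pi$ disagrees with $\gamma_2$; so it suffices to count the sets $A=\mathcal{I}_2\subseteq[n]$ that can legitimately occur.

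Second, I would determine which $A$ are admissible. As always $|A|\ge 2$ (two permutations differ in $0$ or at least $2$ positions, and $|A|=0$ means the game ends at guess two), and $|A|\le n-1$ so that $\mathcal{J}_2\ne\emptyset$. The substantive requirement is that $\pi=\gamma_3$ be a derangement --- this is precisely what forces $\mathcal{J}_1=\emptyset$, i.e.\ that $\rho_{CSL}(\pi)$ equals $2$ rather than $1$. Writing $A=\{i_1<i_2<\dots<i_m\}$ and reading subscripts cyclically mod $m$, right cyclic shift places $\gamma_2[i_{j-1}]$ into position $i_j$, so $\gamma_3$ has a fixed point iff $\gamma_2[i_{j-1}]=i_j$ for some $j$. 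Using the explicit form of $\gamma_2$, this occurs iff either $i_j=i_{j-1}+1$ for a non-wrap consecutive pair of $A$ (equivalently, $A$ contains two consecutive integers $t,t+1$ with $t\le n-1$), or the wrap term $\gamma_2[n]=1$ triggers, which needs $n\in A$ and $1\in A$; the wrap alternative $i_1=i_m+1$ is impossible. Hence $\gamma_3$ fails to be a derangement exactly when $A$ contains a cyclically adjacent pair $\{t,(t\bmod n)+1\}$, i.e.\ $\gamma_3\in D_n$ iff $A$ is an independent set of the cycle graph $C_n$ on vertex set $[n]$ whose edges are these $n$ pairs. I would also verify the converse direction explicitly --- a cyclically adjacent pair in $A$ genuinely produces a fixed point $\gamma_3[i_j]=i_j$ --- and note that for every independent $A$, $\gamma_3$ disagrees with $\gamma_2$ on all of $A$, so $\mathcal{J}_2=[n]\setminus A$ exactly, which with the injectivity remark closes the reduction.

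Third, I would finish by counting independent sets. It is a standard fact that the number of independent sets of $C_n$ (including $\emptyset$) is the Lucas number $L_n$, provable from the recurrence satisfied by these counts together with the initial values matching $L_3=4$ and $L_4=7$. Exactly one independent set has size $0$ and exactly $n$ have size $1$; all are excluded by $|A|\ge 2$, while every independent set of size $\ge 2$ is admissible, the constraint $|A|\le n-1$ being vacuous since an independent set of $C_n$ has size at most $\lfloor n/2\rfloor\le n-2$ for $n\ge 4$. Therefore the number of admissible $A$, and hence $\bigl|\{\pi\mid\rho_{CSL}(\pi)=2\}\bigr|$, equals $L_n-n-1$.

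The crux --- and the step I expect to need the most care --- is the fixed-point analysis of $\gamma_3$ in the second paragraph: handling the cyclic wrap-around correctly (how position $n$ interacts with position $1$ under the shift) and nailing the ``if and only if'' in both directions, so that the excluded sets are precisely the non-independent ones. The reduction to counting sets $\mathcal{I}_2$ and the Lucas-number count of independent sets of $C_n$ are both routine.
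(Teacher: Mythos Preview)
Your proposal is correct and follows essentially the same route as the paper: reduce to counting admissible $\mathcal{I}_2$, observe that $\gamma_2=[2,3,\dots,n,1]$, show that a cyclically adjacent pair in $\mathcal{I}_2$ forces a fixed point in $\gamma_3$ (and conversely), and then count subsets of $[n]$ with no cyclically adjacent pair and size $\ge 2$ to obtain $L_n-n-1$. Your write-up is in fact more careful than the paper's---you explicitly argue the bijection, the ``only if'' direction, and the Lucas-number count via independent sets of $C_n$, whereas the paper simply asserts the forbidden configurations and cites \oeis{A023548}---but the underlying argument is the same.
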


\begin{proof}
As before, it suffices to count the number of legal possible sets $\mathcal{J}_2$. Notice that there \textit{still} cannot be two adjacent entries $i,i+1 \in \mathcal{I}_2$ (letting $i+1$ stand in for $(i \mod n) + 1$, as usual.) If such an $i$ existed, then $\gamma_2[i] = i+1$ after left shifting, and since $i,i+1 \in \mathcal{I}_2$, the entry $i+1$ in position $i$ is right shifted back to position $i+1$ in $\gamma_3$. But $\gamma_3[i+1] = i+1$ produces a contradiction since we assumed that the game ended in three guesses and $i+1 \notin \mathcal{J}_1$ since $\mathcal{J}_1 = \emptyset$. 

Then we may count legal subsets $\mathcal{J}_2$ with size at least 1 and at most $n-2$ by instead counting sets $\mathcal{I}_2$ with size at least 2 and at most $n-1$ which contain no pairs of adjacent elements. These are counted by \oeis{A023548}, where we see that the number of such sets from a permutation of length $n$ is $L_n - n - 1$, where $L_n$ is the $n$-th Lucas number.

\end{proof}

With this count in hand, we now aim to show that $\left| \left\{ \pi \mid \rho_{S}(\pi) = 2 \right\} \right| > \left| \left\{ \pi \mid \rho_{CSL}(\pi) = 2 \right\} \right|$ by considering the number of legal sets $\mathcal{J}_2$. We will use a similar strategy to that employed in Theorem \ref{thm:bestrho2}.

\begin{theorem} \label{thm:worstrho2}
    For any inductive strategy $S$ of length $n \geq 4$, $\left| \left\{ \pi \mid \rho_{S}(\pi) = 2 \right\} \right| > \left| \left\{ \pi \mid \rho_{CSL}(\pi) = 2 \right\} \right|$
\end{theorem}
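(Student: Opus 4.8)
The plan is to recast the count of legal sets $\mathcal{I}_2$ in graph‑theoretic language and then, for every inductive $S\neq CSL$, exhibit one legal $\mathcal{I}_2$ that $CSL$ does not admit. First I would recall the reduction used in the proofs of Theorem \ref{thm:bestrho2} and Theorem \ref{thm:csl_rho2}: since $\rho_S(\pi)=2$ forces $\mathcal{J}_1=\emptyset$, we have $\gamma_2=(S[n])^{-1}=:\delta$, a cyclic permutation of $[n]$, and $\gamma_3$ is obtained by right‑shifting $\delta$ on the positions of $\mathcal{I}_2$; hence $\left|\{\pi:\rho_S(\pi)=2\}\right|$ equals the number of \emph{legal} sets $\mathcal{I}_2\subseteq[n]$ with $2\le|\mathcal{I}_2|\le n-1$, where, writing $\mathcal{I}_2=\{q_1,\dots,q_m\}$ in cyclic order, $\mathcal{I}_2$ is illegal precisely when $\delta(q_t)=q_{t+1}$ for some $t$ (indices mod $m$). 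This is exactly the empty‑arc obstruction of Theorem \ref{thm:bestrho2}, just phrased as ``some cyclically consecutive pair of $\mathcal{I}_2$ is an arrow of $\delta$.''

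The key idea is to compare $S$ not against the standard cycle but against the cycle of $\delta$. Let $H_\delta$ be the graph on $[n]$ with an edge joining $a$ to $\delta(a)$ for each $a$; since $\delta$ is a single $n$-cycle, $H_\delta$ is an $n$-cycle as a graph. If $I$ is independent in $H_\delta$, then no two of its elements form a pair $a,\delta(a)$, so in particular no consecutive pair of $I$ is an arrow of $\delta$; thus every $H_\delta$-independent set of size at least $2$ is a legal $\mathcal{I}_2$, and since $H_\delta\cong C_n$ there are $L_n-n-1$ of these. Hence $\left|\{\pi:\rho_S(\pi)=2\}\right|\ge L_n-n-1$ for every inductive $S$. (When $S=CSL$, $\delta=[2,3,\dots,n,1]$, $H_\delta$ is the standard cycle, every arc is empty, and the converse also holds — illegal $\iff$ contains an edge of $C_n$ — which re‑derives the exact value $L_n-n-1$ of Theorem \ref{thm:csl_rho2}.)

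For the strict inequality, suppose $S\neq CSL$, so $\delta\neq[2,3,\dots,n,1]$ and there is a position $p$ with $\delta(p)\neq p+1$ (with $p+1$ read mod $n$ as in Lemma \ref{lem:der2ex}). I would then verify that $I^\ast:=\{p,\,p+1,\,\delta(p)\}$ — three distinct positions, since $\delta$ is a derangement and $\delta(p)\neq p+1$ — is a legal $\mathcal{I}_2$ that is not $H_\delta$-independent, hence was not counted above. It is not $H_\delta$-independent, as it contains the edge $\{p,\delta(p)\}$. Read in cyclic order $I^\ast$ is $(p,\,p+1,\,\delta(p))$ (because $p$ and $p+1$ are adjacent in $[n]$, so nothing lies between them), so its consecutive pairs are $(p,p+1)$, $(p+1,\delta(p))$, and $(\delta(p),p)$, and none is an arrow of $\delta$: $\delta(p)\neq p+1$ by choice of $p$; $\delta(p+1)\neq\delta(p)$ by injectivity of $\delta$; and $\delta(\delta(p))\neq p$ because an $n$-cycle with $n\ge 3$ has no $2$-cycle. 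Since $n\ge 4$ gives $|I^\ast|=3\le n-1$, this produces a legal set beyond the $H_\delta$-independent ones, so $\left|\{\pi:\rho_S(\pi)=2\}\right|\ge(L_n-n-1)+1>L_n-n-1=\left|\{\pi:\rho_{CSL}(\pi)=2\}\right|$, as desired.

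The main obstacle is conceptual rather than computational. The forbidden‑set families for $S$ and for $CSL$ are genuinely incomparable — for $n=5$ and $S[5]=[2,4,1,5,3]$ one has $\delta=[3,1,5,2,4]$, and $\{1,3\}$ is illegal for $S$ but legal for $CSL$ — so no inclusion argument is available. The trick is to notice that $CSL$'s legal sets are exactly the independent sets of a cycle, so the right lower bound for a general $\delta$ must use the independent sets of the cycle \emph{of} $\delta$ rather than of $C_n$; and the one place the verification of $I^\ast$ could break, the pair $(\delta(p),p)$, is ruled out precisely by $\delta$ being a full $n$-cycle. I would also remark, as in the discussion around Theorem \ref{thm:bestrho2}, that the inequality is strict only because we may assume $S\neq CSL$.
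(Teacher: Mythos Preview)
Your proof is correct and follows essentially the same route as the paper: both lower-bound the number of legal $\mathcal{I}_2$ by the independent sets of the cycle determined by $\delta=(S[n])^{-1}$ (your graph $H_\delta$ is exactly the paper's cycle-notation permutation $p$), giving $L_n-n-1$, and then exhibit one additional legal set when $S\neq CSL$. Your extra set $\{p,\,p+1,\,\delta(p)\}$ is the paper's $\{j,k,m\}$ with the specific choice $m=k+1$, and your three-pair verification of its legality is more explicit than the paper's.
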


\begin{proof}
Consider an arbitrary inductive strategy $S$ and a game of permutation wordle with the secret permutation $\pi$ ending in three guesses but such that $\rho_S(\pi) = 2$. Then $\gamma_2 = (S[n])^{-1}$, as we have seen before. From $\gamma_2$, construct a new permutation $p$ as follows. Let $p[1] = 1$. Then let $p[2] = k_1$ where $\gamma_2[k_1] = 1$. Next, let $p[3] = k_2$ where $\gamma_2[k_2] = k_1$. Repeat up to $j = k_{n-2}$, at which point all elements of $[n]$ will appear in $p$. Since $S[n]$ is a cyclic permutation, so is $(S[n])^{-1} = \gamma_2$, and thus $p$ is a valid, cyclic permutation of length $n$. More simply, throughout this paper we have defined the permutations used in strategy components using the bottom line of two-line notation. Now, $p$ is the same permutation as $\gamma_2$, but written in cycle notation.

Now recall that in Theorem \ref{thm:bestrho2}, we observed that for any inductive strategy $S$, we cannot have $\mathcal{I}_2 = \{j,k\}$ such that $\gamma_2[j] = i$ and $\gamma_2[k] = j$. So, to count legal subsets $\mathcal{I}_2$, we must count subsets that do not contain $\{j,k\}$. This is equivalent to counting subsets of $p$ which do not contain adjacent elements since each adjacent pair is precisely a duo $\{j,k\}$ where $\gamma_2[k] = j$. Using the same argument from Theorem \ref{thm:csl_rho2} to count such subsets, we have at least $L_n - n - 1$ possible sets $\mathcal{I}_2$. Thus $|\{ \pi \mid \rho_{S}(\pi) = 2\}| \geq L_n - n - 1$. 

But we have actually undercounted, here. Before, in Theorem \ref{thm:csl_rho2} when counting legal $\mathcal{J}_2$'s for $CSL$, \textit{any} superset of $\{i,i+1\}$ produced a contradiction and thus could not be a legal $\mathcal{I}_2$ set. In Theorem \ref{thm:bestrho2}, we noted that for a generic strategy $S$, we need only eliminate supersets of $\{j,k\}$ which also contain elements $l$ such that $j < l < k$. In other words, a set $\{j,k,m\}$ where $k < m < j$ (modulo $n$) is a totally acceptable candidate set for $\mathcal{I}_2$ since the entry $j$ will still be right shifted back to position $j$, which was previously occupied by the element $k$, when $\gamma_3$ is generated. 

In $CSL$, $k = j-1$ for all $j \in [n]$. Hence, there are no possible $m$ such that $k < m < j$. Then for any inductive strategy $S$ which is \textit{not} $CSL$, there must be at least one $j$ such that its corresponding $k \neq j-1$. Such a $j$ has at least one possible $m$, and for each possible $m$ there is at least one set $\{j,k,m\}$ which is a legal $\mathcal{I}_2$, and hence represents a permutation $\pi$ such that $\rho_S(\pi) = 2$. Then for any arbitrary inductive strategy $S$, $|\{ \pi \mid \rho_{S}(\pi) = 2\}| \geq (L_n - n - 1) + 1$, or $|\{ \pi \mid \rho_{S}(\pi) = 2\}| > L_n - n - 1$. Thus $|\{ \pi \mid \rho_{S}(\pi) = 2\}| > |\{ \pi \mid \rho_{CSL}(\pi) = 2\}|$, as desired.

\end{proof}

Since $|\{ \pi \mid \rho_{S}(\pi) = 2\}| > |\{ \pi \mid \rho_{CSL}(\pi) = 2\}|$ and we know that $|\{ \pi \mid \rho_{S}(\pi) = 1\}|$ and $|\{ \pi \mid \rho_{S}(\pi) = 3\}|$ are equal for all inductive strategies $S$, we conclude that $[x^3]f_{CSL}(x) < [x^3]f_S(x)$. This proves Theorem \ref{thm:worstcubic}. And, as promised, we may now use the fact that $[x^3]f_{CSL}(x) = \sum_{i=1}^3 |\{\pi \mid \rho_{CSL}(\pi) = i\}|$ to calculate $[x^3]f_{CSL}(x)$. Adding together the results of Theorems \ref{thm:rho1}, \ref{thm:rho3}, and \ref{thm:worstrho2}, we have:

\[ 
[x^3] f_{CSL}(x) = 
1 + \left( 1 - 2^{n + 1} + 3^n + \frac{n^2}{2} + \frac{5n}{2} - n\cdot 2^n \right) + \left( L_n - n - 1 \right)
\]

\vspace{0.25cm}

For $3 \leq n \leq 8$, we can use this equation to replicate the results produced by Maple at the beginning of Section \ref{sub:worstcubic}.

\section{Conclusion \& Future Work} \label{sec:conc}

Thus far, we have proven Kutin-Smithline's conjecture for games terminating in exactly three guesses. In Section \ref{sec:gf}, we used the coefficients of a strategy's generating function to show inductively that $CS$ is optimal for a game terminating in exactly three guesses, showing that $CS$ is able to guess the maximum number of permutations in exactly three guesses. It is possible that the techniques in this paper could be extended to games terminating in four or more guesses, however the work in this paper surrounding cubic coefficients was motivated by experimental observations which do not extend to coefficients of higher order terms in our generating functions. So, it may be that a more nuanced approach is required to analyze and compare those coefficients.

Even so, the results in this paper concern a specific case of the work originally conducted by Kutin and Smithline. We narrowed our focus to consider games terminating in one, two or three guesses, and we also made several assumptions and simplifications when defining what constitutes a ``strategy." Our strategies consisted primarily of cyclic permutations, so expanding this to derangements, or even to general permutations, will likely change our findings. We also required that a strategy component $S[n]$ was fixed within a strategy $S$, meaning that there was no allowance for a guesser to modify their strategy midway through, perhaps after several failed guesses resulting in $\mathcal{I}_r = \emptyset$. So there is still a large amount of work to be done investigating cyclic shift's performance and determining whether it truly \textit{is} optimal in a game of permutation wordle, and not just in a game with the limitations and assumptions employed in our paper.

\section*{Appendix} \label{sec:appen}
The majority of the findings in this paper are supported by a Maple package, which can be found \href{https://aurorahiveley.github.io/wordle.txt}{here}. This package includes examples as well as a handful of procedures which verify (experimentally) each of the theorems in this paper. Any bugs should be reported to the author at aurora.hiveley@rutgers.edu.

\section*{Acknowledgements}
The author thanks her advisor Dr. Doron Zeilberger for the introduction to the problem and feedback on earlier drafts.

\bibliographystyle{plain}
\bibliography{sources}

\end{document}